\title{The Combinatorial Nullstellens\"atze Revisited}
\author{Pete L. Clark}
\begin{document}
\newtheorem{lemma}{Lemma}
\newtheorem{prop}[lemma]{Proposition}
\newtheorem{cor}[lemma]{Corollary}
\newtheorem{thm}[lemma]{Theorem}
\newtheorem{ques}{Question}
\newtheorem{quest}[lemma]{Question}
\newtheorem{conj}[lemma]{Conjecture}
\newtheorem{fact}[lemma]{Fact}
\newtheorem*{mainthm}{Main Theorem}
\newtheorem*{unthm}{Theorem}
\newtheorem{obs}[lemma]{Observation}
\newtheorem{hint}{Hint}
\newtheorem{remark}[lemma]{Remark}
\newtheorem{example}[lemma]{Example}

\begin{abstract}
We revisit and further explore the celebrated Combinatorial Nullstellens\"atze of N. Alon in several different directions. 
\end{abstract}

\maketitle
\newcommand{\pp}{\mathfrak{p}}
\newcommand{\DD}{\mathcal{D}}
\newcommand{\F}{\ensuremath{\mathbb F}}
\newcommand{\Fp}{\ensuremath{\F_p}}
\newcommand{\Fl}{\ensuremath{\F_l}}
\newcommand{\Fpbar}{\overline{\Fp}}
\newcommand{\Fq}{\ensuremath{\F_q}}
\newcommand{\PP}{\mathbb{P}}
\newcommand{\PPone}{\mathfrak{p}_1}
\newcommand{\PPtwo}{\mathfrak{p}_2}
\newcommand{\PPonebar}{\overline{\PPone}}
\newcommand{\N}{\ensuremath{\mathbb N}}
\newcommand{\Q}{\ensuremath{\mathbb Q}}
\newcommand{\Qbar}{\overline{\Q}}
\newcommand{\R}{\ensuremath{\mathbb R}}
\newcommand{\Z}{\ensuremath{\mathbb Z}}
\newcommand{\SSS}{\ensuremath{\mathcal{S}}}
\newcommand{\Rn}{\ensuremath{\mathbb R^n}}
\newcommand{\Ri}{\ensuremath{\R^\infty}}
\newcommand{\C}{\ensuremath{\mathbb C}}
\newcommand{\Cn}{\ensuremath{\mathbb C^n}}
\newcommand{\Ci}{\ensuremath{\C^\infty}}\newcommand{\U}{\ensuremath{\mathcal U}}
\newcommand{\gn}{\ensuremath{\gamma^n}}
\newcommand{\ra}{\ensuremath{\rightarrow}}
\newcommand{\fhat}{\ensuremath{\hat{f}}}
\newcommand{\ghat}{\ensuremath{\hat{g}}}
\newcommand{\hhat}{\ensuremath{\hat{h}}}
\newcommand{\covui}{\ensuremath{\{U_i\}}}
\newcommand{\covvi}{\ensuremath{\{V_i\}}}
\newcommand{\covwi}{\ensuremath{\{W_i\}}}
\newcommand{\Gt}{\ensuremath{\tilde{G}}}
\newcommand{\gt}{\ensuremath{\tilde{\gamma}}}
\newcommand{\Gtn}{\ensuremath{\tilde{G_n}}}
\newcommand{\gtn}{\ensuremath{\tilde{\gamma_n}}}
\newcommand{\gnt}{\ensuremath{\gtn}}
\newcommand{\Gnt}{\ensuremath{\Gtn}}
\newcommand{\Cpi}{\ensuremath{\C P^\infty}}
\newcommand{\Cpn}{\ensuremath{\C P^n}}
\newcommand{\lla}{\ensuremath{\longleftarrow}}
\newcommand{\lra}{\ensuremath{\longrightarrow}}
\newcommand{\Rno}{\ensuremath{\Rn_0}}
\newcommand{\dlra}{\ensuremath{\stackrel{\delta}{\lra}}}
\newcommand{\pii}{\ensuremath{\pi^{-1}}}
\newcommand{\la}{\ensuremath{\leftarrow}}
\newcommand{\gonem}{\ensuremath{\gamma_1^m}}
\newcommand{\gtwon}{\ensuremath{\gamma_2^n}}
\newcommand{\omegabar}{\ensuremath{\overline{\omega}}}
\newcommand{\dlim}{\underset{\lra}{\lim}}
\newcommand{\ilim}{\operatorname{\underset{\lla}{\lim}}}
\newcommand{\Hom}{\operatorname{Hom}}
\newcommand{\Ext}{\operatorname{Ext}}
\newcommand{\Part}{\operatorname{Part}}
\newcommand{\Ker}{\operatorname{Ker}}
\newcommand{\im}{\operatorname{im}}
\newcommand{\ord}{\operatorname{ord}}
\newcommand{\unr}{\operatorname{unr}}
\newcommand{\B}{\ensuremath{\mathcal B}}
\newcommand{\Ocr}{\ensuremath{\Omega_*}}
\newcommand{\Rcr}{\ensuremath{\Ocr \otimes \Q}}
\newcommand{\Cptwok}{\ensuremath{\C P^{2k}}}
\newcommand{\CC}{\ensuremath{\mathcal C}}
\newcommand{\gtkp}{\ensuremath{\tilde{\gamma^k_p}}}
\newcommand{\gtkn}{\ensuremath{\tilde{\gamma^k_m}}}
\newcommand{\QQ}{\ensuremath{\mathcal Q}}
\newcommand{\I}{\ensuremath{\mathcal I}}
\newcommand{\sbar}{\ensuremath{\overline{s}}}
\newcommand{\Kn}{\ensuremath{\overline{K_n}^\times}}
\newcommand{\tame}{\operatorname{tame}}
\newcommand{\Qpt}{\ensuremath{\Q_p^{\tame}}}
\newcommand{\Qpu}{\ensuremath{\Q_p^{\unr}}}
\newcommand{\scrT}{\ensuremath{\mathfrak{T}}}
\newcommand{\That}{\ensuremath{\hat{\mathfrak{T}}}}
\newcommand{\Gal}{\operatorname{Gal}}
\newcommand{\Aut}{\operatorname{Aut}}
\newcommand{\tors}{\operatorname{tors}}
\newcommand{\Zhat}{\hat{\Z}}
\newcommand{\linf}{\ensuremath{l_\infty}}
\newcommand{\Lie}{\operatorname{Lie}}
\newcommand{\GL}{\operatorname{GL}}
\newcommand{\End}{\operatorname{End}}
\newcommand{\aone}{\ensuremath{(a_1,\ldots,a_k)}}
\newcommand{\raone}{\ensuremath{r(a_1,\ldots,a_k,N)}}
\newcommand{\rtwoplus}{\ensuremath{\R^{2  +}}}
\newcommand{\rkplus}{\ensuremath{\R^{k +}}}
\newcommand{\length}{\operatorname{length}}
\newcommand{\Vol}{\operatorname{Vol}}
\newcommand{\cross}{\operatorname{cross}}
\newcommand{\GoN}{\Gamma_0(N)}
\newcommand{\GeN}{\Gamma_1(N)}
\newcommand{\GAG}{\Gamma \alpha \Gamma}
\newcommand{\GBG}{\Gamma \beta \Gamma}
\newcommand{\HGD}{H(\Gamma,\Delta)}
\newcommand{\Ga}{\mathbb{G}_a}
\newcommand{\Div}{\operatorname{Div}}
\newcommand{\Divo}{\Div_0}
\newcommand{\Hstar}{\cal{H}^*}
\newcommand{\txon}{\tilde{X}_0(N)}
\newcommand{\sep}{\operatorname{sep}}
\newcommand{\notp}{\not{p}}
\newcommand{\Aonek}{\mathbb{A}^1/k}
\newcommand{\Wa}{W_a/\mathbb{F}_p}
\newcommand{\Spec}{\operatorname{Spec}}

\newcommand{\abcd}{\left[ \begin{array}{cc}
a & b \\
c & d
\end{array} \right]}

\newcommand{\abod}{\left[ \begin{array}{cc}
a & b \\
0 & d
\end{array} \right]}

\newcommand{\unipmatrix}{\left[ \begin{array}{cc}
1 & b \\
0 & 1
\end{array} \right]}

\newcommand{\matrixeoop}{\left[ \begin{array}{cc}
1 & 0 \\
0 & p
\end{array} \right]}

\newcommand{\w}{\omega}
\newcommand{\Qpi}{\ensuremath{\Q(\pi)}}
\newcommand{\Qpin}{\Q(\pi^n)}
\newcommand{\pibar}{\overline{\pi}}
\newcommand{\pbar}{\overline{p}}
\newcommand{\lcm}{\operatorname{lcm}}
\newcommand{\trace}{\operatorname{trace}}
\newcommand{\OKv}{\mathcal{O}_{K_v}}
\newcommand{\Abarv}{\tilde{A}_v}
\newcommand{\kbar}{\overline{k}}
\newcommand{\Kbar}{\overline{K}}
\newcommand{\pl}{\rho_l}
\newcommand{\plt}{\tilde{\pl}}
\newcommand{\plo}{\pl^0}
\newcommand{\Du}{\underline{D}}
\newcommand{\A}{\mathbb{A}}
\newcommand{\D}{\underline{D}}
\newcommand{\op}{\operatorname{op}}
\newcommand{\Glt}{\tilde{G_l}}
\newcommand{\gl}{\mathfrak{g}_l}
\newcommand{\gltwo}{\mathfrak{gl}_2}
\newcommand{\sltwo}{\mathfrak{sl}_2}
\newcommand{\h}{\mathfrak{h}}
\newcommand{\tA}{\tilde{A}}
\newcommand{\sss}{\operatorname{ss}}
\newcommand{\X}{\Chi}
\newcommand{\ecyc}{\epsilon_{\operatorname{cyc}}}
\newcommand{\hatAl}{\hat{A}[l]}
\newcommand{\sA}{\mathcal{A}}
\newcommand{\sAt}{\overline{\sA}}
\newcommand{\OO}{\mathcal{O}}
\newcommand{\OOB}{\OO_B}
\newcommand{\Flbar}{\overline{\F_l}}
\newcommand{\Vbt}{\widetilde{V_B}}
\newcommand{\XX}{\mathcal{X}}
\newcommand{\GbN}{\Gamma_\bullet(N)}
\newcommand{\Gm}{\mathbb{G}_m}
\newcommand{\Pic}{\operatorname{Pic}}
\newcommand{\FPic}{\textbf{Pic}}
\newcommand{\solv}{\operatorname{solv}}
\newcommand{\Hplus}{\mathcal{H}^+}
\newcommand{\Hminus}{\mathcal{H}^-}
\newcommand{\HH}{\mathcal{H}}
\newcommand{\Alb}{\operatorname{Alb}}
\newcommand{\FAlb}{\mathbf{Alb}}
\newcommand{\gk}{\mathfrak{g}_k}
\newcommand{\car}{\operatorname{char}}
\newcommand{\Br}{\operatorname{Br}}
\newcommand{\gK}{\mathfrak{g}_K}
\newcommand{\coker}{\operatorname{coker}}
\newcommand{\red}{\operatorname{red}}
\newcommand{\CAY}{\operatorname{Cay}}
\newcommand{\ns}{\operatorname{ns}}
\newcommand{\xx}{\mathbf{x}}
\newcommand{\yy}{\mathbf{y}}
\newcommand{\E}{\mathbb{E}}
\newcommand{\rad}{\operatorname{rad}}
\newcommand{\Top}{\operatorname{Top}}
\newcommand{\Map}{\operatorname{Map}}
\newcommand{\Li}{\operatorname{Li}}
\renewcommand{\Map}{\operatorname{Map}}
\newcommand{\ZZ}{\mathcal{Z}}
\newcommand{\uu}{\mathfrak{u}}
\newcommand{\mm}{\mathfrak{m}}
\newcommand{\zz}{\mathbf{z}}
\newcommand{\Image}{\operatorname{Image}}
\newcommand{\cc}{\mathfrak{c}}
\newcommand{\ii}{\mathfrak{i}}
\renewcommand{\ss}{\mathfrak{s}}


\noindent
\textbf{Terminology: } Throughout this note, a ``ring'' is a commutative ring 
with multiplicative identity.  A ``domain'' is a ring $R$ such that for all $a,b \in R \setminus \{0\}$, $ab \neq 0$.  A ring $R$ is ``reduced'' if for all 
$x \in R$ and $n \in \Z^+$, if $x^n = 0$ then $x = 0$.    

\section{Introduction}

\subsection{The Combinatorial Nullstellens\"atze}
\textbf{} \\ \\ \noindent
This note concerns the following celebrated results of N. Alon.  

\begin{thm}
\label{11.7.2}
\label{CN}
Let $F$ be a field, let $X_1,\ldots,X_n \subset F$ be nonempty and finite, and $X = \prod_{i=1}^n X_i$.  For $1 \leq i \leq n$, put
\begin{equation}
\label{DIAGONALPHIEQ}
 \varphi_i(t_i) = \prod_{x_i \in X_i} (t_i-x_i) \in F[t_i] \subset F[t] = F[t_1,\ldots,t_n].
\end{equation}
Let $f \in F[t]$ be a polynomial which vanishes on all the common zeros of $\varphi_1,\ldots,\varphi_n$: that is, for all $x \in F^n$, if $\varphi_1(x) = \ldots = \varphi_n(x) = 0$, then $f(x) = 0$.  Then: \\
a) (Combinatorial Nullstellensatz I, or \textbf{CNI}) There are $q_1,\ldots,q_n \in F[t]$ such that
\begin{equation} f(t) = \sum_{i=1}^n q_i(t) \varphi_i(t). 
\end{equation}
b) (Supplementary Relations) Let $R$ be the subring of $F$ generated by the coefficients of $f$ and $\varphi_1,\ldots,\varphi_n$.  Then the $q_1,\ldots,q_n$ may be chosen to lie in $R[t]$ and satisfy
\begin{equation}
\label{SUPPLEMENTEQ}
\forall 1 \leq i \leq n, \ \deg q_i \leq \deg f - \deg \varphi_i. 
\end{equation}
\end{thm}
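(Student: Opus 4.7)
The plan is to prove (a) and (b) together by iterated polynomial division. Each $\varphi_i(t_i)$ is monic of degree $|X_i|$ as a univariate polynomial in $t_i$, so we may divide any element of $R[t]$ by $\varphi_i$ regarded as a monic polynomial in $t_i$ with coefficients in $R[t_1,\ldots,t_{i-1},t_{i+1},\ldots,t_n]$; the univariate division algorithm works over any commutative ring when the divisor is monic, which is precisely why the quotients and remainders stay in $R[t]$.

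Set $f_0 = f$, and for $i=1,\ldots,n$ perform division in the variable $t_i$ to write
$$f_{i-1} = q_i\, \varphi_i + f_i, \qquad \deg_{t_i} f_i < |X_i|,$$
with $q_i, f_i \in R[t]$. Two bookkeeping facts must be verified. First, expanding $f_{i-1} = \sum_k c_k t_i^k$ with $c_k$ a polynomial in the remaining variables, each term of $q_i$ arises as $c_k t_i^{k-|X_i|}$ for some $k \geq |X_i|$, so $\deg q_i \leq \max_k(\deg c_k + k) - |X_i| \leq \deg f_{i-1} - \deg \varphi_i$; by induction $\deg f_{i-1} \leq \deg f$, which yields the desired bound on $\deg q_i$ and also $\deg f_i \leq \deg f_{i-1} \leq \deg f$. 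Second, for $j \neq i$ the division step replaces monomials $c_k t_i^k$ with $k \geq |X_i|$ by lower $t_i$-powers having the same $c_k$-coefficients, so $\deg_{t_j} q_i \leq \deg_{t_j} f_{i-1}$ and consequently $\deg_{t_j} f_i \leq \deg_{t_j} f_{i-1}$. In particular the inequality $\deg_{t_j} f_j < |X_j|$ achieved at step $j$ persists through all later steps. After $n$ steps,
$$f = \sum_{i=1}^n q_i\, \varphi_i + g, \qquad g \in R[t], \quad \deg_{t_i} g < |X_i| \text{ for all } i,$$
with the $q_i$ satisfying (\ref{SUPPLEMENTEQ}).

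It remains to show $g = 0$. Each $\varphi_i$ vanishes on $X = \prod_i X_i$ by construction and $f$ does by hypothesis, so $g$ vanishes on $X$. I then invoke the standard fact that a polynomial in $F[t]$ with $\deg_{t_i} < |X_i|$ for each $i$ that vanishes on the grid $\prod_i X_i$ is identically zero. This is proved by induction on $n$: fixing $(a_2,\ldots,a_n) \in X_2 \times \cdots \times X_n$, the polynomial $g(t_1,a_2,\ldots,a_n)$ has degree $< |X_1|$ yet vanishes on $X_1$, hence is zero; varying the $a_i$ and applying the inductive hypothesis to each coefficient of $g$ as a polynomial in $t_1$ forces $g = 0$.

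The main obstacle is the degree bookkeeping in the iteration, above all the observation that dividing by $\varphi_i(t_i)$ does not increase the degree in any other variable (and does not blow up the total degree). Once these monotonicity statements are made explicit, both (a) and (b) fall out of the single identity produced at the end of the iteration.
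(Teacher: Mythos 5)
Your proof is correct and follows essentially the same route as the paper: iterated monic division in each variable (the paper's ``cylindrical reduction,'' Proposition \ref{CYLINDRICALREDUCTION}, with the same degree bookkeeping) to produce a reduced remainder, followed by the Chevalley--Alon--Tarsi vanishing lemma for grid-reduced polynomials (Theorem \ref{CATS}) to conclude the remainder is zero. The observation that monic division keeps coefficients in $R[t]$, which gives part b), is also how the paper obtains the Supplementary Relations.
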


\begin{thm}(Combinatorial Nullstellensatz II, or \textbf{CNII})
\label{11.7.4}
\label{PM}
Let $F$ be a field, $n \in \Z^+$, $a_1,\ldots,a_n \in \N$, and let $f \in F[t] = F[t_1,\ldots,t_n]$.  We suppose: \\
(i) $\deg f \leq a_1 + \ldots + a_n$.  \\
(ii) The coefficient of $t_1^{a_1} \cdots t_n^{a_n}$ in $f$ is nonzero.  \\
Then, for any subsets $X_1,\ldots,X_n$ of $F$ with $\# X_i = a_i+1$ for $1 \leq i \leq n$, there is $x = (x_1,\ldots,x_n) \in X = \prod_{i=1}^n X_i$ such that 
$f(x) \neq 0$.
\end{thm}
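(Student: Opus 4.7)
The plan is to argue by contradiction, assuming $f(x) = 0$ for every $x \in X$ and then showing that the coefficient of $t^a := t_1^{a_1} \cdots t_n^{a_n}$ in $f$ must vanish, contradicting hypothesis (ii). The whole argument reduces CNII to CNI (Theorem \ref{CN}) together with an elementary degree count, and I expect no real obstacle beyond careful bookkeeping.

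Since each $\varphi_i$ depends only on $t_i$ and $\varphi_i(x_i) = 0$ iff $x_i \in X_i$, the common zero locus $\{x \in F^n : \varphi_1(x) = \cdots = \varphi_n(x) = 0\}$ is precisely $X$. Under the contradictory assumption, $f$ vanishes on this common zero locus, so Theorem \ref{CN} yields $q_1, \ldots, q_n \in F[t]$ with $\deg q_i \leq \deg f - \deg \varphi_i$ and $f = \sum_{i=1}^n q_i \varphi_i$.

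The main step is then to show that the coefficient of $t^a$ on the right-hand side vanishes term by term. Fix $i$. Because $\varphi_i$ involves only $t_i$, every monomial appearing in the product $q_i \varphi_i$ has the form $m \cdot t_i^k$, where $m$ is a monomial occurring in $q_i$ and $0 \leq k \leq a_i + 1$ is the power of $t_i$ coming from $\varphi_i$. For such a product to equal $t^a$ one needs $m = t^a/t_i^k$ to be an honest monomial with nonnegative exponents, which forces $k \leq a_i$; in particular
$$\deg m = (a_1 + \cdots + a_n) - k \geq (a_1 + \cdots + a_n) - a_i.$$
On the other hand, the degree bound from CNI and hypothesis (i) give
$$\deg m \leq \deg q_i \leq \deg f - \deg \varphi_i \leq (a_1 + \cdots + a_n) - (a_i + 1),$$
a contradiction. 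Hence no such $m$ exists, so each $q_i \varphi_i$ contributes $0$ to the coefficient of $t^a$. Summing over $i$ gives $0$ on the right-hand side, contradicting the nonzero coefficient of $t^a$ in $f$ guaranteed by (ii), and completing the proof.

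The only content beyond invoking CNI is the monomial-exponent accounting in the third paragraph, which is purely combinatorial; once one tracks the identities $\deg \varphi_i = a_i + 1$ and the degree hypothesis $\deg f \leq a_1 + \cdots + a_n$, the contradiction is automatic. I therefore do not expect any genuine obstacle.
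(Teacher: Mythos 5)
Your proof is correct: the degree accounting in the third paragraph is exactly right. Since $\deg\varphi_i = a_i+1$, any monomial $m$ of $q_i$ that could pair with a power $t_i^k$ of $\varphi_i$ to produce $t_1^{a_1}\cdots t_n^{a_n}$ must satisfy $k \leq a_i$ and hence $\deg m \geq (a_1+\cdots+a_n) - a_i$, while the Supplementary Relations force $\deg m \leq \deg f - (a_i+1) \leq (a_1+\cdots+a_n) - a_i - 1$; so no such pair exists and each $q_i\varphi_i$ contributes nothing to the coefficient of $t^a$. This is Alon's original deduction of CNII from CNI, and the paper alludes to it (``one easily deduces CNII from CNI and the Supplementary Relations'') but does not write it out. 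The route the paper actually carries through is different: it proves the Atomic Formula $r_X(f) = \sum_{x\in X} f(x)\,\delta_{X,x}$, deduces the Coefficient Formula $c_d = \sum_{x\in X} f(x)/\prod_i \varphi_i'(x_i)$, and then CNII is immediate, since $f|_X \equiv 0$ would force $c_d = 0$. Your argument is more elementary in that it needs only CNI plus monomial bookkeeping and no interpolation formula; the paper's route costs more setup but yields strictly more, namely an exact expression for $c_d$ in terms of the values of $f$ on $X$, which is what powers the quantitative applications such as the Restricted Variable Chevalley--Warning Theorem. Both are valid proofs of the stated theorem.
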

\noindent
Alon used his Combinatorial Nullstellens\"atze to derive various old and new results in number theory and combinatorics, starting with Chevalley's Theorem that a homogeneous polynomial of degree $d$ in at least $d+1$ variables over a finite field has a nontrivial zero.  The use of polynomial methods has burgeoned to a remarkable degree in recent years.  We recommend the recent survey \cite{Tao13}, which lucidly describes the main techniques but also captures the sense of awe and excitement at the extent to which these very simple ideas have cracked open the field of combinatorial number theory and whose range of future applicability seems almost boundless. 
\\ \\
Of Theorems \ref{CN} and \ref{PM}, Theorem \ref{CN} is stronger: 
one easily deduces CNII from CNI and the Supplementary Relations, but (apparently) not conversely.  On the other hand, for appplications in combinatorics and number theory, CNII seems more useful: \cite{Alon99} organizes its applications into seven different sections, and only in the last section is CNI applied.  Later works have followed this trend to an even larger degree, to the exent that most later works simply refer to Theorem \ref{PM} as the Combinatorial Nullstellensatz.  We find this trend somewhat unfortunate: on the one hand, CNI is the stronger result and does have \emph{some} applications in its own right.  On the other hand, it is CNI which is really a Nullstellensatz in the sense of algebraic geometry, and we find this geometric connection interesting and suggestive.
\\ \\
Recently attention has focused on the following sharpening of CNII due to 
Schauz, Lason and Karasev-Petrov \cite[Thm. 3.2]{Schauz08a}, \cite[Thm. 3]{Lason10}, \cite[Thm. 4]{Karasev-Petrov12}).

\begin{thm}(Coefficient Formula)
\label{COEFFTHM}
Let $F$ be a field, and let $f \in F[t]$.  Let 
$a_1,\ldots,a_n \in \N$ be such that $\deg f \leq a_1 + \ldots + a_n$.  For 
each $1 \leq i \leq n$, let $X_i \subset F$ with $\# X_i = a_i + 1$, and let $X = \prod_{i=1}^n X_i$.   Let $d = (a_1,\ldots,a_n)$, and let $c_d$ be the coefficient of $t_1^{a_1} \cdots t_n^{a_n}$ 
in $f$.  Then
\begin{equation}
\label{COEFF}
 c_d = \sum_{x = (x_1,\ldots,x_n) \in X} \frac{f(x)}{\prod_{i=1}^n \varphi_i'(x_i)}.
\end{equation}
\end{thm}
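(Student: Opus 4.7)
The plan is to combine the polynomial-division reduction underlying \textbf{CNI} with multivariate Lagrange interpolation. First I would put $f$ in a convenient normal form: since each $\varphi_i$ is monic of degree $a_i+1$ when regarded as a polynomial in $t_i$, I can successively divide $f$ by $\varphi_1$ in $t_1$, divide the remainder by $\varphi_2$ in $t_2$, and so on (using that $\varphi_j \in F[t_j]$, so later divisions do not raise $\deg_{t_i}$ for $i<j$). This yields
$$ f = \sum_{i=1}^n q_i\, \varphi_i + r, \qquad \deg_{t_i}(r) \le a_i \ \text{for all } i, \qquad \deg q_i \le \deg f - (a_i+1), $$
precisely the decomposition furnished by Theorem \ref{CN} together with the Supplementary Relations.

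The key observation is then that the coefficient of $t_1^{a_1}\cdots t_n^{a_n}$ in each $q_i\varphi_i$ is zero. Indeed, since $\varphi_i \in F[t_i]$, multiplication by $\varphi_i$ does not alter the exponent of $t_j$ for $j \ne i$. So for a monomial $t^{\alpha}$ of $q_i$ to contribute to $t_1^{a_1}\cdots t_n^{a_n}$ one would need $\alpha_j = a_j$ for every $j \ne i$, hence $|\alpha| \ge \sum_{j \ne i} a_j$; but the degree bound forces $|\alpha| \le \deg f - (a_i+1) \le \sum_j a_j - a_i - 1$, giving $\alpha_i \le -1$, a contradiction. Therefore $c_d$ equals the coefficient of $t_1^{a_1}\cdots t_n^{a_n}$ in the remainder $r$.

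Finally, since $\deg_{t_i}(r) \le a_i$ for every $i$, the remainder $r$ lies in the span of the product Lagrange basis over $X$ and so is determined by its values on $X$:
$$ r(t) = \sum_{x \in X} r(x) \prod_{i=1}^n \frac{\varphi_i(t_i)}{(t_i - x_i)\, \varphi_i'(x_i)}. $$
The coefficient of $t_i^{a_i}$ in the $i$-th factor is $1/\varphi_i'(x_i)$ (each $\varphi_i$ being monic of degree $a_i+1$), so the coefficient of $t_1^{a_1}\cdots t_n^{a_n}$ in the product is $\prod_i 1/\varphi_i'(x_i)$; and since $\varphi_i(x_i)=0$ for $x \in X$, we have $r(x)=f(x)$ on $X$. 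Assembling these pieces gives the asserted identity. The only delicate step is the degree-bookkeeping in the middle paragraph, which is where the Supplementary Relations of \textbf{CNI} become indispensable; once one knows that the $q_i\varphi_i$ contribute nothing to the monomial $t_1^{a_1}\cdots t_n^{a_n}$, the rest is just the standard one-variable Lagrange expansion applied coordinate by coordinate.
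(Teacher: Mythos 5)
Your proof is correct and follows essentially the same route as the paper: reduce $f$ modulo $\langle \varphi_1,\ldots,\varphi_n\rangle$ by cylindrical division, check that the coefficient of $t_1^{a_1}\cdots t_n^{a_n}$ is unchanged by the reduction, and then read that coefficient off the Lagrange expansion of the reduced representative (the paper's Atomic Formula). The only difference is in the middle step: you extract $c_d(f)=c_d(r)$ from the Supplementary Relations' degree bounds on the $q_i$, whereas the paper tracks the coefficient through elementary cylindrical reductions via the notion of a $d$-topped polynomial -- which is why the paper's version extends to the weaker hypothesis that $f$ is merely $(a_1,\ldots,a_n)$-topped.
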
 
\noindent
In this note we revisit and further explore these theorems, in three 
different ways:
\\ \\
$\bullet$ In $\S$ 2 we improve CNI to a \textbf{Finitesatz} (Theorem \ref{FRN}): a full Nullstellensatz for polynomial functions on finite subsets of $F^n$ over an arbitrary field $F$.  When $F$ is finite we recover the \textbf{Finite Field Nullstellensatz} of G. Terjanian (Corollary \ref{FFN}). 
\\ \\
$\bullet$ In $\S 3$ we expose the close relation between Theorem \ref{COEFFTHM} and Chevalley's original proof of Chevalley's Theorem.  Adapting Chevalley's method 
gives a version of Theorem \ref{COEFFTHM} valid over \emph{any} ring $R$ 
subject to an additional condition on $X$ which always holds over a field.  This generalization is due to U. Schauz \cite{Schauz08a}.  The main novelty here is our exposition of these results following Chevalley's original arguments.  However, when we close up this circle of ideas we find 
that it yields a \textbf{Restricted Variable Chevalley-Warning Theorem} (Theorem \ref{RESVARCHEV}).  Restricted variable versions of Chevalley's Theorem and Warning's Second Theorem have recently appeared in the literature \cite{Brink11}, \cite{CFS14}, so Theorem \ref{RESVARCHEV} is in some sense the last piece of the ``Chevalley-to-Alon'' conversion process.
\\ \\
$\bullet$ In $\S 4$ we further analyze the evaluation map from 
polynomials to function on an arbitrary subset $X \subset R^n$ for an arbitrary ring.  Our results are far from definitive, and one of our main goals 
of this section is to show that the (perhaps rather arid-looking) formalism of a restricted variable Nullstellensatz leads 
naturally to some interesting open problems in polynomial interpolation over 
commutative rings.


\section{A Nullstellensatz for Finitely Restricted Polynomial Functions}

\subsection{Alon's Nullstellensatz versus Hilbert's Nullstellensatz}
\textbf{} \\ \\ \noindent
The prospect of improving Theorem \ref{CN} \emph{as a Nullstellensatz} has not been explored, perhaps because the notion of a Nullstellensatz, though seminal in algebra and geometry, is less familiar to researchers in combinatorics.  But it was certainly familiar to Alon, who 
began \cite{Alon99} by recalling the following result.

\begin{thm}(Hilbert's Nullstellensatz)
\label{HN}
Let $F$ be an algebraically closed field, let $g_1,\ldots,g_m \in F[t]$, and let $f \in F[t]$ be a polynomial which vanishes 
on all the common zeros of $g_1,\ldots,g_m$.  Then there is $k \in \Z^+$ 
and $q_1,\ldots,q_m \in F[t]$ such that 
\[ f^k = \sum_{i=1}^m q_i g_i. \]
\end{thm}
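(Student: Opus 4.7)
The plan is to reduce the strong form to the weak Nullstellensatz via the Rabinowitsch trick, and then prove the weak form by identifying the maximal ideals of $F[t_1,\ldots,t_n]$.

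First I would establish the \emph{weak Nullstellensatz}: if $g_1,\ldots,g_m \in F[t]$ have no common zero in $F^n$, then $1 \in (g_1,\ldots,g_m)$. The contrapositive is more natural: if $I = (g_1,\ldots,g_m)$ is a proper ideal, produce a common zero. Pick a maximal ideal $\mathfrak{m} \supseteq I$ and consider the residue field $L = F[t]/\mathfrak{m}$. Since $L$ is finitely generated as an $F$-algebra and is itself a field, Zariski's lemma implies $L/F$ is algebraic; as $F$ is algebraically closed, the inclusion $F \hookrightarrow L$ is an isomorphism. Writing $a_i$ for the image of $t_i$ in $L \cong F$, one gets $t_i - a_i \in \mathfrak{m}$ for each $i$, so the maximal ideal $(t_1-a_1,\ldots,t_n-a_n) \subseteq \mathfrak{m}$, and maximality forces equality. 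Thus $(a_1,\ldots,a_n)$ is a common zero of all elements of $\mathfrak{m}$, in particular of $g_1,\ldots,g_m$.

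Next I would derive the strong form via Rabinowitsch. Introduce a fresh indeterminate $s$ and consider $g_1,\ldots,g_m$ together with $1 - s f$ inside $F[t_1,\ldots,t_n,s]$. These have no common zero in $F^{n+1}$: at any putative common zero $(a,b)$ one has $g_i(a) = 0$ for all $i$, hence $f(a) = 0$ by hypothesis, hence $1 - b f(a) = 1 \neq 0$, contradicting the last vanishing. By the weak Nullstellensatz there are $h_0, h_1, \ldots, h_m \in F[t,s]$ with
\[
 1 \; = \; \sum_{i=1}^m h_i(t,s)\, g_i(t) \; + \; h_0(t,s)\,(1 - s f(t)).
\]
Now substitute $s = 1/f(t)$, i.e.\ pass to the fraction field $F(t)$, to kill the last term, and then clear denominators by multiplying through by a sufficiently large power $f(t)^k$. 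This produces the required identity $f^k = \sum_{i=1}^m q_i\, g_i$ with $q_i \in F[t]$.

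The main obstacle is Zariski's lemma itself; everything else in the argument is essentially formal. Standard routes to it include Noether normalization combined with the going-up theorem, or a direct inductive argument exploiting the fact that a field that is a finitely generated algebra over a subring which is a localization of a PID must coincide with that localization only when the PID is already a field. Either way, this algebraic input is where the hypothesis that $F$ be algebraically closed genuinely enters, and it is the step I expect to take the most care.
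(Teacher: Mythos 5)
Your argument is correct, and it is the standard proof: the weak Nullstellensatz via Zariski's lemma, followed by the Rabinowitsch trick. There is nothing in the paper to compare it against — Theorem \ref{HN} is recalled there as classical background (the paper's own contributions are the restricted-variable analogues) and is never proved in the text. Two tiny points of hygiene: before substituting $s = 1/f$ you should dispatch the case $f = 0$ separately (it is trivial, since then $f^1 = 0$ is the empty combination), and after clearing denominators you should note that $k$ can be taken in $\Z^+$ (multiply by a further power of $f$ if the degree in $s$ happened to be $0$). Neither affects the soundness of the argument.
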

\noindent
Let us compare Theorems \ref{CN} and \ref{HN}.  They differ in the following points:  \\
$\bullet$ In Theorem \ref{CN}, $F$ can be any field.  In Hilbert's Nullstellensatz, $F$ must be algebraically closed.  Really must: if not, there is a nonconstant 
polynomial $g(t_1)$ without roots in $F$; taking $m = 1$, $g_1 = g$ 
and $f = 1$, the conclusion fails.  \\
$\bullet$ In CNI, the conclusion is that $f$ itself is a 
linear combination of the $\varphi_i$'s with polynomial coefficients, but in 
Hilbert's Nullstellensatz we must allow taking a power of $f$.  Really must: 
e.g. take $k \in \Z^+$ $m = 1$, $g_1 = t_1^k$ and $f = t_1$.  \\
$\bullet$ The Supplementary Relations give upper bounds on the degrees of 
the polynomials $q_i$: they make CNI \textbf{effective}.  Hilbert's Nullstellensatz is not effective.  Effective versions have been given by Brownawell \cite{Brownawell87}, Koll\'ar \cite{Kollar88} and others, but their bounds are much more complicated than the ones in Theorem \ref{CN}.  \\
$\bullet$ In Theorem \ref{CN} the $\varphi_i$'s are extremely restricted.   On the other hand, in Hilbert's Nullstellensatz the $g_i$'s can be any set of polynomials.  Thus Theorem \ref{HN} is a 
\emph{full Nullstellensatz}, whereas Theorem \ref{CN} is a \emph{partial Nullstellensatz}.
\\ \\
We will promote Theorem \ref{CN} to a full Nullstellensatz  
for all finite subsets.

\subsection{The Restricted Variable Formalism}
\textbf{} \\ \\ \noindent
In this section we give the formalism for a Nullstellensatz in the restricted variable context.  Although our main theorem applies to finite subsets of affine $n$-space over a field, it is possible to set things up more generally, and doing so raises some further interesting questions and will be seen to have some useful applications.   
\\ \\
For a set $Z$, let $2^Z$ be the set of all subsets of $Z$.  For a ring $R$, let $\mathcal{I}(R)$ be the set of ideals of $R$.  For a subset $J$ of a ring $R$, let $\langle J \rangle$ denote the ideal of $R$ generated by $J$, and let $\rad J = \rad \langle J \rangle$ denote the set of all $f \in R$ such that $f^k \in \langle J \rangle$ for some $k \in \Z^+$.  An ideal $J$ 
is \textbf{radical} if $J = \rad J$.  
\\ \\
Let $R$ be a ring, and let $X \subset R^n$.  For $x \in X, \ f \in R[t]$, we put 
\[ I(x) = \{f \in R[t] \mid f(x) = 0\}, \]
\[ V_X(f) = \{ x \in X \mid f(x) = 0\}. \]
Put $V = V_{R^n}$.  We may extend $I$ and $V_X$ to maps on power sets as follows:
\[ I: 2^X \ra 2^{R[t]}, \ A \subset X \mapsto I(A) = \bigcap_{a \in A} I(a) = 
\{ f \in R[t] \mid \forall a \in A, \ f(a) = 0\}, \]
\[ V_A: 2^{R[t]} \ra 2^X, \ J \subset R[t] \mapsto V_A(J) = \bigcap_{f \in J} V_A(f) = \{ x \in X \mid \forall f \in J, \ f(a) = 0\}. \]
  Then in fact 
\[ I: 2^X \ra \mathcal{I}(R[t]), \ \forall J \subset R[t], \ V(J) = V(\langle J \rangle). \]
The maps $I$ and $V_A$ are \textbf{antitone}: 
\[A_1 \subset A_2 \subset X \implies I(A_1) \supset I(A_2), \]
\[ J_1 \subset J_2 \subset F[t] \implies V_A(J_1) \supset V_A(J_2), \]
so their compositions are \textbf{isotone}: 
\[ A_1 \subset A_2 \subset X \implies V_X(I(A_1)) \subset V_X(I(A_2)), \]
\[ J_1 \subset J_2 \subset R[t] \implies I(V_X(J_1)) \subset I(V_X(J_2)). \]
We have $X = V_X(0)$, so 
\[\forall J \subset R[t], \ I(V_X(J)) \supset I(V_X(0)) = I(X). \]

\subsection{The Finitesatz}

\begin{lemma}
\label{FRNALEMMA}
a) Suppose $R$ is a domain.  For all ideals $J_1,\ldots,J_m$ of $R[t]$, we have $V_X(J_1 \cdots J_m) = \bigcup_{i=1}^m V_X(J_i)$.  \\
b) Suppose $R$ is reduced.  Then for all $A \subset R^n$, $I(A)$ is a radical ideal.  \\
c) If $R$ is reduced, then for all $J \subset R[t]$, 
\begin{equation}
\label{LOWERBOUNDEQ}
I(V_X(J)) \supset \rad(J+I(X)) \supset \rad J + I(X) \supset J + I(X).
\end{equation}
\end{lemma}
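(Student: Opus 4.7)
The plan is to prove the three parts in order, since each is quite routine once the setup is unwound. The main conceptual point in (a) is that $R$ being a domain forces polynomial evaluation to be multiplicative in a strong sense; in (b) and (c) the key input is that $R$ reduced forces $f(x)^k = 0 \implies f(x) = 0$ for each $x$. No step looks hard; the main obstacle is really just keeping track of which object is an ideal and which is just a subset.

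For part (a), by induction it suffices to treat $m = 2$. The inclusion $V_X(J_1) \cup V_X(J_2) \subset V_X(J_1 J_2)$ is immediate from the definition of the product ideal: any element of $J_1 J_2$ is a finite sum of products $f_1 f_2$ with $f_i \in J_i$, and if $f_i(x) = 0$ then so is each such product. For the reverse inclusion, I would argue contrapositively: if $x \in X$ lies in neither $V_X(J_1)$ nor $V_X(J_2)$, pick $f_i \in J_i$ with $f_i(x) \neq 0$; then $f_1 f_2 \in J_1 J_2$ and $(f_1 f_2)(x) = f_1(x) f_2(x) \neq 0$ since $R$ is a domain, so $x \notin V_X(J_1 J_2)$.

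For part (b), suppose $f \in R[t]$ and $f^k \in I(A)$ for some $k \in \Z^+$. Then for every $a \in A$ we have $f(a)^k = (f^k)(a) = 0$ in $R$, and since $R$ is reduced, $f(a) = 0$; hence $f \in I(A)$.

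For part (c), I would prove the three containments right to left. The containment $J + I(X) \subset \rad J + I(X)$ is immediate since $J \subset \rad J$. For $\rad J + I(X) \subset \rad(J + I(X))$, note that $J \subset J + I(X)$ gives $\rad J \subset \rad(J + I(X))$, while $I(X) \subset J + I(X) \subset \rad(J + I(X))$, and the radical is an ideal, so the sum lies inside. Finally, for $\rad(J + I(X)) \subset I(V_X(J))$: since $I(X)$ is already an ideal, $\langle J + I(X) \rangle = \langle J \rangle + I(X)$, so if $f \in \rad(J + I(X))$ we may write $f^k = g + h$ with $g \in \langle J \rangle$ and $h \in I(X)$. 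Given $x \in V_X(J) = V_X(\langle J \rangle)$, we have $x \in X$ so $h(x) = 0$, and $g(x) = 0$ by definition of $V_X$; thus $f(x)^k = 0$, and reducedness of $R$ gives $f(x) = 0$, i.e., $f \in I(V_X(J))$.
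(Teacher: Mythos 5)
Your proposal is correct and follows essentially the same route as the paper: parts (a) and (b) reproduce the paper's arguments almost verbatim, and the only difference in part (c) is that you verify the key inclusion $\rad(J+I(X)) \subset I(V_X(J))$ by a direct element computation (writing $f^k = g+h$ with $g \in \langle J \rangle$, $h \in I(X)$ and evaluating at $x \in V_X(J)$), whereas the paper deduces it abstractly from the fact that $I(V_X(J))$ is a radical ideal containing both $J$ and $I(X)$. Since both arguments invoke reducedness of $R$ at exactly the same point, this is a presentational rather than a substantive difference.
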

\begin{proof}
a) We intend to allow $m = 0$, in which case the identity reads $V_X(\langle 1 \rangle) = \varnothing$, which is true.  Having established that, we immediately reduce to the case $m = 2$.  Since $J_1 J_2 \subset J_i$ for $i = 1,2$, $V_X(J_1 J_2) \supset V_X(J_i)$ 
for $i = 1,2$, thus $V_X(J_1 J_2) \supset V_X(J_1) \cup V_X(J_2)$.  Now let $x \in X \setminus (V_X(J_1) \cup V_X(J_2))$.  For $i = 1,2$ there 
is $f_i \in J_i$ with $f_i(x) \neq 0$.  Since $R$ is a domain, 
$f_1(x) f_2(x) \neq 0$, so $x \notin V_X(J_1 J_2)$. \\
b) If $f \in R[t]$ and $f^k \in I(A)$ for some $k \in \Z^+$, then for all 
$x \in A$ we have $f(x)^k = 0$.  Since $R$ is reduced, this implies $f(x) = 0$ 
for all $x \in A$ and thus $f \in I(A)$.  \\
c) $I(V_X(J)) = I(X \cap V(J))$ is a radical ideal containing both $I(X)$ and 
$I(V(J)) \supset J$, so it contains $\rad (J+I(X))$.  The other inclusions 
are immediate.  
\end{proof} 
\noindent
It is well known (see Theorem \ref{CATS}) that when $F$ is infinite we have $I(F^n) = \{0\}$.  This serves to motivate the following restatement of Hilbert's Nullstellensatz.

\begin{thm} 
Let $F$ be an algebraically closed field.  For all $J \subset F[t]$, 
\[ I(V(J)) = \rad J. \]
\end{thm}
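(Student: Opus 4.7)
The plan is to derive this from Theorem \ref{HN} (the classical Hilbert Nullstellensatz) and Lemma \ref{FRNALEMMA}. Both inclusions should be proved separately, with the nontrivial content lying in $I(V(J)) \subset \rad J$.

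For the inclusion $I(V(J)) \supset \rad J$: this is the elementary direction. Since $F$ is algebraically closed it is in particular infinite, so $I(F^n) = \{0\}$. Taking $X = F^n$ in Lemma \ref{FRNALEMMA}(c) gives
\[ I(V(J)) \supset \rad(J + I(F^n)) = \rad J. \]
Alternatively one argues directly: if $f \in \rad J$ then $f^k = \sum_{i} q_i g_i$ with $g_i \in J$, so for every $x \in V(J)$ we have $f(x)^k = 0$, and since $F$ is a field (hence reduced) this forces $f(x) = 0$.

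For the reverse inclusion $I(V(J)) \subset \rad J$: this is where Theorem \ref{HN} is applied, and it is the only nontrivial step. The mild obstacle is that Theorem \ref{HN} is phrased for finitely many generators while $J$ is an arbitrary subset of $F[t]$. I would handle this with the Hilbert Basis Theorem: $F[t_1,\ldots,t_n]$ is Noetherian, so $\langle J \rangle = \langle g_1, \ldots, g_m \rangle$ for some finite collection $g_1,\ldots,g_m \in J$. Because $V(J) = V(\langle J \rangle) = V(g_1,\ldots,g_m)$, any $f \in I(V(J))$ vanishes on all common zeros of $g_1,\ldots,g_m$. Theorem \ref{HN} then yields $k \in \Z^+$ and $q_1,\ldots,q_m \in F[t]$ such that
\[ f^k = \sum_{i=1}^m q_i g_i \in \langle J \rangle, \]
whence $f \in \rad J$.

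Combining the two inclusions gives $I(V(J)) = \rad J$. Conceptually the only real content is the cited Theorem \ref{HN}; the restatement merely repackages it in the $I$/$V$ formalism of the preceding subsection and extends to arbitrary (not necessarily finite) subsets $J$ via Noetherianness.
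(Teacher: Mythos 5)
Your proof is correct and is exactly what the paper intends: it presents this theorem without proof as a ``restatement of Hilbert's Nullstellensatz,'' and your argument — the easy inclusion via Lemma \ref{FRNALEMMA}(c) with $I(F^n)=\{0\}$, plus the reduction to finitely many generators by Noetherianness so that Theorem \ref{HN} applies — is the standard derivation being tacitly invoked. No discrepancies to report.
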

\noindent
Here is the main result of this section.

\begin{thm}(Finitesatz)
\label{FRN}
\label{MAINTHM}
Let $F$ be a field, and let $X \subset F^n$ be a finite subset. \\  
a) For all ideals $J$ of $F[t]$, we have 
\begin{equation}
\label{FRNAEQ}
 I(V_X(J)) =  J + I(X). 
\end{equation}
In particular, if $J \supset I(X)$ then $I(V_X(J)) = J$. \\
b) (CNI) Suppose $X = \prod_{i=1}^n X_i$ 
for finite nonempty subsets $X_i$ of $F$.  Define $\varphi_i(t_i) \in F[t_i]$ as in (\ref{DIAGONALPHIEQ}) above.   Then 
\begin{equation}
\label{IXEQ}
 I(X) = \langle \varphi_1,\ldots,\varphi_n \rangle. 
\end{equation}
\end{thm}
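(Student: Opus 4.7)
The plan is to prove (b) first, as it is essentially a repackaging of CNI (Theorem \ref{CN}(a)). The inclusion $\langle \varphi_1, \ldots, \varphi_n \rangle \subseteq I(X)$ is immediate, since each $\varphi_i$ vanishes on $X = \prod_{i=1}^n X_i$. For the reverse, the common zero locus of $\varphi_1, \ldots, \varphi_n$ in $F^n$ is exactly $X$, so any $f \in I(X)$ vanishes on these common zeros. CNI then furnishes $q_i \in F[t]$ with $f = \sum_{i=1}^n q_i \varphi_i$, placing $f$ in $\langle \varphi_1, \ldots, \varphi_n \rangle$.

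For (a), I would analyze the quotient ring $F[t]/I(X)$. The key claim is that for any finite $X \subset F^n$, the evaluation map $\epsilon \colon F[t] \to F^X$, $f \mapsto (f(x))_{x \in X}$, is surjective with kernel $I(X)$, yielding an $F$-algebra isomorphism $F[t]/I(X) \cong F^X$. The kernel is $I(X)$ by definition. Surjectivity comes from Lagrange-type interpolation: for each $x \in X$ and each $y \in X \setminus \{x\}$, choose an index $i(y)$ with $x_{i(y)} \neq y_{i(y)}$, and put
\[ L_x(t) = \prod_{y \in X \setminus \{x\}} \frac{t_{i(y)} - y_{i(y)}}{x_{i(y)} - y_{i(y)}}. \]
Then $L_x(x) = 1$ and $L_x(y) = 0$ for $y \in X \setminus \{x\}$, so the $\epsilon(L_x)$ form the standard basis of $F^X$.

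Now $F^X$ is a finite product of fields, so its ideals correspond bijectively to subsets of $X$: to $A \subseteq X$ one associates $\mathfrak{a}_A = \{(c_x) : c_x = 0 \text{ for all } x \in A\}$. Pulling back along $\epsilon$, the ideals of $F[t]$ containing $I(X)$ are precisely those of the form $I(A)$ for some $A \subseteq X$. Both $J + I(X)$ and $I(V_X(J))$ contain $I(X)$, hence each equals $I(A)$ for a unique $A \subseteq X$. For $I(V_X(J))$ this subset is $V_X(J)$ by definition. For $J + I(X)$, note that for any $A \subseteq X$, $I(A) \supseteq J$ iff $A \subseteq V_X(J)$; since $I$ is antitone, the smallest such $I(A)$ corresponds to the largest such $A$, namely $A = V_X(J)$. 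As $J + I(X)$ is the smallest ideal containing both $J$ and $I(X)$, it must equal $I(V_X(J))$. The ``in particular'' clause is the case $J \supseteq I(X)$.

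The main obstacle is the construction of the Lagrange polynomials $L_x$, which rests on the elementary but essential observation that distinct points of $X$ differ in at least one coordinate, permitting a product of linear factors that separates $x$ from every other point of $X$. Once the ring-theoretic identification $F[t]/I(X) \cong F^X$ is in place, the remainder of the argument is a transparent lattice-of-ideals computation, and part (a) does not require $X$ to be a product set, which is the content of CNI exploited in part (b).
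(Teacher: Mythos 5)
Your argument for part a) is correct and is essentially the paper's argument: both proofs hinge on the isomorphism $F[t]/I(X) \cong F^X$ and on the fact that the ideals of the finite product of fields $F^X$ are indexed by subsets of $X$. You establish surjectivity of the evaluation map directly, by building a separating Lagrange polynomial $L_x$ from linear factors in coordinates where $x$ differs from each other point of $X$; the paper instead writes $I(A)=\bigcap_x \mm_x$ and invokes the Chinese Remainder Theorem plus a dimension count. Your route is more explicit and arguably more elementary (it also isolates exactly where finiteness of $X$ and the field axioms enter), while the paper's route sets up the machinery ($V_X$ and $I$ as mutually inverse bijections between $2^X$ and the ideals containing $I(X)$) that it reuses elsewhere. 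Your endgame --- both $J+I(X)$ and $I(V_X(J))$ are the minimum of the set of ideals containing $J$ and $I(X)$ --- is a clean substitute for the paper's sandwich $J+I(X)\subseteq I(V_X(J))\subseteq I(V_X(J+I(X)))=J+I(X)$.

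Part b) is where your proposal falls short of the paper's intent. The statement $I(X)=\langle\varphi_1,\ldots,\varphi_n\rangle$ is not merely ``a repackaging of'' Theorem \ref{CN}a): it is verbatim equivalent to it, since the common zero locus of the $\varphi_i$ is $X$ and membership in $\langle\varphi_1,\ldots,\varphi_n\rangle$ is exactly expressibility as $\sum_i q_i\varphi_i$. Deriving part b) by citing Theorem \ref{CN}a) therefore proves nothing; the whole point of labelling part b) ``(CNI)'' is that the Finitesatz is supposed to \emph{yield} a proof of CNI, not presuppose one. The missing ingredient is a dimension count: part a) gives surjectivity of the evaluation map $F[t]\to F^X$, the monomials $t_1^{e_1}\cdots t_n^{e_n}$ with $e_i<\# X_i$ show that $F[t]/\langle\varphi_1,\ldots,\varphi_n\rangle$ has $F$-dimension at most $\prod_i \# X_i=\# X=\dim_F F^X$, and since $\langle\varphi_1,\ldots,\varphi_n\rangle\subseteq I(X)$ the induced surjection $F[t]/\langle\varphi_1,\ldots,\varphi_n\rangle\to F^X$ must be an isomorphism, forcing $\langle\varphi_1,\ldots,\varphi_n\rangle=I(X)$. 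With that substitution your proof is complete and self-contained.
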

\begin{proof}
a) Let $F$ be a field, and let $X \subset F^n$ be finite.  Let $x = (x_1,\ldots,x_n) \in X$.  Let $\mm_x = \langle t_1-x_1,\ldots,t_n-x_n \rangle$.  
Then $F[t]/\mm_x \cong F$, so $\mm_x$ is maximal.  On the other hand $\mm_x \subset I(x) \subsetneq F[t]$, so $\mm_x = I(x)$.  Moreover 
$V_X(\mm_x) = \{x\}$, hence 
\[ I(V_X(\mm_x)) = I(x) = \mm_x. \]
Now let $A = \{x_i\}_{i=1}^k \subset X$.  Then
\[ I(A) = I(\bigcup_i \{x_i\}) = \bigcap_i I(x_i) = \bigcap_i \mm_{x_i}, \]
so by the Chinese Remainder Theorem \cite[Cor. 2.2]{Lang},
\[ F[t]/I(A) = F[t]/\bigcap_i \mm_{x_i} \cong \prod_i F[t]/\mm_{x_i} \cong F^{\# X}. \]
Let $F^A$ be the set of all maps $f: A \ra F$, so $F^A$ is an $F$-algebra under pointwise addition and multiplication and $F^A \cong \prod_{i=1}^{\# A} F$.   The \textbf{evaluation map}
\[E_A = F[t] \ra F^A, \ f \in F[t] \mapsto (x \in A \mapsto f(x)) \]
is a homomorphism of $F$-algebras.  Moreover $\Ker E_A = I(A)$, so $E_A$ 
induces a map 
\[ \iota: F[t]/I(A) \hookrightarrow F^A. \]
Thus $\iota$ is an injective $F$-linear map between $F$-vector of equal finite 
dimension, hence an is an isomorphism of rings.  It follows that
\[ \# \mathcal{I}(F[t]/I(X)) = \# \mathcal{I}(F^X) = 2^{\# X}. \]  
By restricting $V_X$ to ideals containing $I(X)$, we get maps
\[ V_X: \mathcal{I}(F[t]/I(X)) \ra 2^X, \]
\[ I: 2^X \ra \mathcal{I}(F[t]/I(X)). \]
For all $A \subset X$, we have 
\[ V_X(I(A)) = V_X(\prod_{i=1}^k \mm_{x_i}) = \bigcup_{i=1}^k V_X(\mm_{x_i}) = \bigcup_{i=1}^k \{x_i\} =  A. \]
Since $\mathcal{I}(F[t]/I(X))$ and $2^X$ have the same finite cardinality, 
it follows that $V_X$and $I$ are mutually inverse bijections!  Thus for any 
ideal $J$ of $F[t]$, using (\ref{LOWERBOUNDEQ}) we get
\[ J+I(X) \subset I(V_X(J)) \subset I(V_X(J+I(X))) = J + I(X). \]
b) Let $d_i = \deg \varphi_i$ and put $\Phi = \langle \varphi_1,\ldots,\varphi_n \rangle$.  Since $\varphi_i|_X \equiv 0$ for all $i$, $\Phi \subset \Ker E$, so there is an induced surjective $F$-algebra homomorphism 
\[ \tilde{E}_X: F[t]/\Phi \ra F[t]/\Ker E_X \ra F^X. \]
Since $F[t]/\Phi$ and $F^X$ are $F$-vector spaces of dimension $d_1 \cdots d_n$,  $\tilde{E}$ is an isomorphism.  Hence $F[t]/\Phi \ra 
F[t]/\Ker E$ is injective, i.e., $\Phi = \Ker E = I(X)$.  
\end{proof}


\begin{cor}(Finite Field Nullstellensatz \cite{Terjanian66})
\label{FFN}
Let $\F_q$ be a finite field.  Then for all ideals $J$ of $\F_q[t]$, we have $I(V_{\F_q^n}(J)) = J + \langle t_1^q-t_1,\ldots,t_n^q-t_n \rangle$.
\end{cor}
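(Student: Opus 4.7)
The plan is to obtain the Finite Field Nullstellensatz as an immediate specialization of Theorem \ref{FRN}, taking $F = \F_q$ and $X = \F_q^n$. Since $\F_q$ is finite, $X$ is a finite subset of $F^n$, so both parts (a) and (b) of the Finitesatz apply. The whole task reduces to identifying $I(\F_q^n)$ explicitly, then quoting part (a).

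First I would set $X_i = \F_q$ for each $1 \le i \le n$, so that $X = \prod_{i=1}^n X_i = \F_q^n$ has the product structure required by part (b) of Theorem \ref{FRN}. The associated diagonal polynomial from (\ref{DIAGONALPHIEQ}) is
\[
\varphi_i(t_i) = \prod_{x \in \F_q}(t_i - x),
\]
and the key identity I would invoke is the standard fact that the elements of $\F_q$ are precisely the roots of $t^q - t$ in $\F_q$, each appearing with multiplicity one; comparing monic polynomials of degree $q$ over $\F_q$ gives $\varphi_i(t_i) = t_i^q - t_i$. Then (\ref{IXEQ}) of Theorem \ref{FRN}(b) yields
\[
I(\F_q^n) = \langle t_1^q - t_1,\ldots,t_n^q - t_n \rangle.
\]

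Next I would apply part (a) of Theorem \ref{FRN} to an arbitrary ideal $J$ of $\F_q[t]$ with $X = \F_q^n$: equation (\ref{FRNAEQ}) gives $I(V_{\F_q^n}(J)) = J + I(\F_q^n)$, and substituting the identification of $I(\F_q^n)$ just obtained produces exactly the claimed equality.

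There is no real obstacle here: the content of Terjanian's theorem has been absorbed into the Finitesatz, and the only nontrivial verification is the elementary identity $\prod_{x \in \F_q}(t-x) = t^q - t$, which is immediate from the fact that $\F_q^\times$ is a group of order $q-1$ (so every element satisfies $t^q = t$) together with degree counting.
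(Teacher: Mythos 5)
Your proposal is correct and is exactly the paper's proof: the Corollary is obtained by applying Theorem \ref{FRN} with $F = X_1 = \cdots = X_n = \F_q$, using part (b) together with the identity $\prod_{x \in \F_q}(t-x) = t^q - t$ to identify $I(\F_q^n)$, and then part (a) for an arbitrary ideal $J$. You have merely written out the specialization in more detail than the paper does.
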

\begin{proof}
Apply Theorem \ref{FRN} with $F = X_1 = \ldots = X_n = \F_q$.
\end{proof}

\section{Cylindrical Reduction and the Atomic Formula}

\subsection{From Chevalley to Alon}
\textbf{} \\ \\ \noindent
The first application of CNII in \cite{Alon99} is to Chevalley's Theorem.  But there is a tighter relationship: the technique Alon uses to prove CNI directly generalizes the technique that Chevalley used, a process which we call \textbf{cylindrical reduction}. 
Chevalley applies (his special case of) CNI to prove his theorem in a different way from Alon's deduction of CNII: whereas Alon uses Theorem \ref{CN}b), Chevalley gives an explicit formula for a reduced polynomial in terms of its associated polynomial function.  This \textbf{Atomic Formula} easily implies the Coefficient Formula, which in turn immediately implies CNII.  Moreover, since the spirit of CNII is to deduce information about the coefficients of a polynomial 
from information about its values on a finite set, the Atomic Formula is really the natural result along these lines, as it literally recovers the polynomial 
from its values on a sufficiently large finite set.  Thus we feel that researchers should have the Atomic Formula in their toolkits.
\\ \\
Our Atomic Formula is one of the ``interpolation formulas'' of a 2008 work of U. Schauz \cite[Thm. 2.5]{Schauz08a}.  Unfortunately it seems that Schauz's work has not been properly appreciated. 
Thus in this section we attempt to present this material in a way which reveals it to be as simple and appealing as Chevalley's classic work.  

\subsection{Cylindrical Reduction}
 
\begin{lemma}(Polynomial Division)
\label{POLDIV} \\
Let $R$ be a ring, and let $a(t_1),b(t_1) \in R[t_1]$ with $b$ monic of degree $d$. \\
a) There are unique polynomials $q$ and $r$ with $a = qb + r$ and $\deg r < d$.  \\
b) Suppose $R = A[t_2,\ldots,t_n]$ is itself a polynomial ring over a ring $A$, so $R[t_1] = A[t_1,\ldots,t_n] = A[t]$ and that $b \in A[t_1]$.  Then: \\
$\bullet$ If $q$ has a monomial term 
of multidegree $(d_1,\ldots,d_n)$, then $a$ has a monomial term of 
multidegree $(d_1+d,d_2,\ldots,d_n)$.  It follows that 
\[\deg a \leq \deg q + d. \]
$\bullet$ If $r$ has a monomial term of multidegree $(d_1,\ldots,d_n)$, 
then $a$ has a monomial term of multidegree $(e_1,\ldots,e_n)$ with $d_i \leq e_i$ for all $1 \leq i \leq n$.   It follows that 
\[ \deg r \leq \deg a. \]
\end{lemma}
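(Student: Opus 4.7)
Part (a) is the classical Euclidean division algorithm, which goes through over any commutative ring because monicity of $b$ provides a unit leading coefficient in $t_1$. I would prove existence by induction on $\deg_{t_1} a$: when $\deg_{t_1} a < d$, take $q = 0$ and $r = a$; otherwise let $e = \deg_{t_1} a$, let $a_e \in R$ be the leading $t_1$-coefficient of $a$, and apply the inductive hypothesis to $a - a_e t_1^{e-d} b$, which has strictly smaller $t_1$-degree since the top $t_1$-terms cancel. Adding $a_e t_1^{e-d}$ to the recursive quotient yields $q$. For uniqueness, if $a = q_1 b + r_1 = q_2 b + r_2$ with $\deg_{t_1} r_i < d$, then $(q_1 - q_2) b = r_2 - r_1$; monicity of $b$ forces the $t_1$-degree of the left side to equal $\deg_{t_1}(q_1 - q_2) + d \geq d$ whenever $q_1 \neq q_2$, contradicting $\deg_{t_1}(r_2 - r_1) < d$.

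For part (b) I plan to prove both multidegree bullets by strong induction on $\deg_{t_1} a$, reusing the recursive construction from part (a). At the inductive step with $e = \deg_{t_1} a \geq d$, write $q = a_e t_1^{e-d} + q'$ and $r = r'$, where $a' := a - a_e t_1^{e-d} b = q' b + r'$ has $t_1$-degree strictly less than $e$. A monomial of $q$ at a given multidegree $(d_1, \ldots, d_n)$ either comes from the current-step piece $a_e t_1^{e-d}$ (forcing $d_1 = e - d$ and producing a monomial of $a_e$ which lifts to a monomial of $a$'s leading $t_1$-component $a_e t_1^e$ at multidegree $(d_1 + d, d_2, \ldots, d_n)$), or is inherited from $q'$, in which case the inductive hypothesis supplies a witnessing monomial of $a'$ that one tracks back to $a$ via $a = a' + a_e t_1^{e-d} b$. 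The remainder bullet is handled in parallel using $r = r'$ and the inductive hypothesis on $a'$. The consequent degree bounds $\deg a \leq \deg q + d$ and $\deg r \leq \deg a$ then fall out by applying the respective bullets to a monomial of maximal total degree.

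The principal obstacle is in the inherited case of the first bullet (and analogously the second): a monomial of $a'$ supplied by the inductive hypothesis could in principle be cancelled inside $a$ by a competing contribution from $a_e t_1^{e-d} b$. The rigidity that resolves this is that $b \in A[t_1]$ does not involve $t_2, \ldots, t_n$, so multiplication by $b$ only shuffles the $t_1$-exponent of each monomial through $\{0, 1, \ldots, d\}$ while leaving the other exponents untouched; hence interfering contributions live only in the $t_1$-window $[e-d, e]$, and together with the bound $\deg_{t_1} q' < e - d$ a case analysis on the position of the target $t_1$-exponent relative to this window prevents cancellations from destroying the sought monomial.
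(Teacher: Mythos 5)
Your part (a) is fine and is exactly the argument the paper gestures at. The real problem is in part (b), and it is precisely the ``principal obstacle'' you flag: the cancellation in the inherited case is not a technicality to be dispatched by a case analysis on the $t_1$-window --- it genuinely occurs, and the first bullet in the form you are trying to prove it (every monomial of $q$ lifts to a monomial of $a$) is false. Take $n=1$, $b = t_1^2 + t_1$, $a = t_1^3 - t_1$; the algorithm gives $q = t_1 - 1$, $r = 0$. Then $q$ has a monomial of multidegree $(0)$, but $a$ has no monomial of multidegree $(2)$: in your recursion the witness $-t_1^2$ supplied by the inductive hypothesis for $a' = -t_1^2 - t_1$ is exactly cancelled by the $+t_1^2$ coming from $a_e t_1^{e-d} b = t_1^3 + t_1^2$, and the target exponent $2$ lies inside your window $[e-d,e]=[1,3]$, so nothing in your setup forbids this. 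The same defect infects the second bullet: its conclusion is true, but the particular witness handed to you by the inductive hypothesis can be destroyed when you add back $a_e t_1^{e-d}b$ (with $b = t_1^2+t_1$ and $a = t_1^3+1$ the degree-one witness $t_1$ for the remainder at the inner stage is cancelled at the next stage), so ``handled in parallel'' does not go through as written. Note also that even if the first bullet held, applying it to a top-degree monomial of $q$ would give $\deg a \geq \deg q + d$, not the displayed $\leq$; the inequality actually used in Proposition \ref{CYLINDRICALREDUCTION}b) is $\deg q \leq \deg a - d$, i.e.\ $\deg a \geq \deg q + d$, and that is what should be proved.

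The repair is to stop tracking monomials through the recursion and instead use the hypothesis $b \in A[t_1]$ structurally. Write $a = \sum_\alpha a_\alpha(t_1)\, t_2^{\alpha_2}\cdots t_n^{\alpha_n}$ and likewise for $q$ and $r$. Since $b$ involves only $t_1$, the uniqueness in part (a) shows the division is performed componentwise: $a_\alpha = q_\alpha b + r_\alpha$ in $A[t_1]$ with $\deg r_\alpha < d$. Monicity gives $\deg a_\alpha = \deg q_\alpha + d$ whenever $q_\alpha \neq 0$, and $a_\alpha \neq 0$ with $\deg a_\alpha \geq \deg r_\alpha$ whenever $r_\alpha \neq 0$. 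Hence for each monomial $(d_1,\ldots,d_n)$ of $q$ the polynomial $a$ has a monomial $(e_1,d_2,\ldots,d_n)$ with $e_1 \geq d_1 + d$ (the correct form of the first bullet), which yields $\deg a \geq \deg q + d$; and for each monomial $(d_1,\ldots,d_n)$ of $r$ it has one with $e_1 \geq d_1$, which yields $\deg r \leq \deg a$. The paper offers no argument for part (b) beyond ``contemplating the division algorithm,'' so there is nothing there to compare against in detail; but the contemplation that works is this componentwise one, not a monomial-by-monomial chase through the recursion.
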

\begin{proof}
a) Uniqueness: if $a = q_1 b + r_1 = q_2 b + r_2$, then since $b$ is monic and $g_1 \neq g_2$ then we have $d \leq \deg((g_1-g_2)b) = \deg (r_2 - r_1) 
< d$, a contradiction.  Existence: when $b$ is monic, the standard division algorithm involves no division of coefficients so works in any ring.  Part b) follows by contemplating the division algorithm.
\end{proof}

\begin{prop}(Cylindrical Reduction)
\label{CYLINDRICALREDUCTION}
Let $R$ be a ring.  For $1 \leq i \leq n$, let $\varphi_i(t_i) \in F[t_i]$ be monic of degree $d_i$.  Put $\Phi = \langle \varphi_1,\ldots,\varphi_n \rangle$ and $d = (d_1,\ldots,d_n)$.  Say $f \in R[t]$ is \textbf{d-reduced} if for all $1 \leq i \leq n$, $\deg_{t_i} f < d_i$.  Then: \\
a) The set $\mathcal{R}_d$ of all $d$-reduced polynomials is a free $R$-module of rank $d_1 \cdots d_n$.  \\
b) For all $f \in R[t]$, there are $q_1,\ldots,q_n \in R[t]$ such that $\deg q_i \leq 
\deg f - \deg \varphi_i$ and $f-\sum_{i=1}^n q_i \varphi_i$ is $d$-reduced.  \\
c) The composite map $\Psi: \mathcal{R}_d \hookrightarrow R[t] \ra R[t]/\Phi$ is an $R$-module isomorphism.  \\
d) For all $f \in R[t]$, there is a unique $r_d(f) \in \mathcal{R}_d$ such that 
$f-r_d(f) \in \langle \varphi_1,\ldots,\varphi_n \rangle$. 
\end{prop}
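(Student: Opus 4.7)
The plan is to dispense with (a) immediately, prove (b) by iterated one-variable division via Lemma \ref{POLDIV}, and deduce (c) and (d) formally.

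Part (a) is immediate: the $d$-reduced polynomials have an evident $R$-basis consisting of the reduced monomials $t_1^{\alpha_1} \cdots t_n^{\alpha_n}$ with $0 \leq \alpha_i < d_i$, of cardinality $d_1 \cdots d_n$. For (b) I set $r_0 = f$ and successively, for $i = 1, \ldots, n$, divide $r_{i-1}$ by $\varphi_i$ viewed as a monic degree-$d_i$ polynomial in $t_i$ with coefficients in the polynomial ring over $R$ in the remaining $n-1$ variables; this produces $r_{i-1} = q_i \varphi_i + r_i$ with $\deg_{t_i} r_i < d_i$. The two bullets of Lemma \ref{POLDIV}b combine to show (i) $\deg r_i \leq \deg r_{i-1}$, so inductively $\deg r_{i-1} \leq \deg f$ and hence $\deg q_i \leq \deg r_{i-1} - d_i \leq \deg f - d_i$; and (ii) every monomial of $r_i$ is coordinatewise dominated by some monomial of $r_{i-1}$, so the $t_j$-reducedness achieved at stage $j \leq i-1$ survives into $r_i$. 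Consequently $r_n$ is $d$-reduced and $f - r_n = \sum_i q_i \varphi_i \in \Phi$.

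For (c), surjectivity of $\Psi$ is immediate from (b). For injectivity I identify
\[
R[t]/\Phi \;\cong\; R[t_1]/(\varphi_1) \otimes_R \cdots \otimes_R R[t_n]/(\varphi_n)
\]
as $R$-algebras, which is legitimate because each $\varphi_i$ depends only on the single variable $t_i$. Since $\varphi_i$ is monic of degree $d_i$, each factor on the right is $R$-free of rank $d_i$ with basis $1, t_i, \ldots, t_i^{d_i - 1}$, so the tensor product is $R$-free of rank $d_1 \cdots d_n$ with basis the images of the $d$-reduced monomials. The map $\Psi$ therefore carries a basis of $\mathcal{R}_d$ from (a) bijectively onto a basis of $R[t]/\Phi$ and is an isomorphism. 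Part (d) follows formally: existence of $r_d(f)$ is the content of (b), while uniqueness is the injectivity of $\Psi$.

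The main subtlety is the injectivity in (c). The tensor-product identification makes the rank match transparent but rests on a standard lemma about quotients of tensor products of algebras; a more hands-on alternative is induction on $n$, where given $r = \sum q_i \varphi_i$ with $r \in \mathcal{R}_d$ one divides each $q_i$ (for $i < n$) by $\varphi_n$ in $t_n$, absorbs the quotients into $q_n$, uses a $t_n$-degree comparison to force the $\varphi_n$-contribution to vanish, and then regards the resulting identity in $R[t_n][t_1, \ldots, t_{n-1}]$ and invokes the inductive hypothesis with base ring $R[t_n]$. Either route works; I would present the tensor-product argument for brevity.
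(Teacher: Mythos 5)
Your proof is correct, and parts a), b), and d) proceed exactly as in the paper: the monomial basis for a), iterated one-variable division with Lemma \ref{POLDIV}b) for b) (you are in fact slightly more careful than the paper in noting that the second bullet of that lemma is what guarantees reducedness in the earlier variables survives later divisions), and the formal deduction of d) from c).

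The one genuine divergence is the injectivity in part c). The paper argues directly: given $f = \sum_i q_i \varphi_i \in \mathcal{R}_d$, it divides each $q_i$ by $\varphi_j$ for $j > i$ and absorbs quotients into $q_j$ so as to assume $\deg_{t_j} q_i < d_j$ for $j > i$, and then a $t_i$-degree comparison forces $f = 0$. Your primary route instead identifies $R[t]/\Phi$ with $R[t_1]/(\varphi_1) \otimes_R \cdots \otimes_R R[t_n]/(\varphi_n)$, uses monicity to see each factor is free of rank $d_i$, and concludes that $\Psi$ carries a basis to a basis. Both are valid over an arbitrary commutative ring. The tensor-product argument is shorter and makes the rank count structural, at the price of invoking the standard facts that quotients commute with tensor products of algebras and that tensor products of free modules are free on the product basis; the paper's argument is entirely elementary and self-contained, at the price of some degree bookkeeping. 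Your ``hands-on alternative'' is essentially the paper's argument, so you have in effect given both proofs. (Interestingly, the tensor-product decomposition does appear elsewhere in the author's notes as an alternative derivation of the freeness of $R[t]/\Phi$.)
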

\begin{proof}
a) Indeed $\{t_1^{a_1} \cdots t_n^{a_n} \mid 0 \leq a_i < d_i\}$ is a basis for $\mathcal{R}_d$. \\
b) Divide $f$ by $\varphi_1$, then divide 
the remainder $r_1$ by $\varphi_2$, then divide the remainder $r_2$ by $\varphi_n$, and so forth, getting $f = \sum_{i=1}^n q_i \varphi_i + r_n$.  Apply 
Lemma \ref{POLDIV}b).  \\
c) Part b) implies that $\Psi$ is surjective.  For the injectivity: let 
$q_1,\ldots,q_n \in R[t]$ be such that $f = \sum_{i=1}^n q_i \varphi_i \in \mathcal{R}_d$.  We must show that $f = 0$.  For each $i$, by dividing $q_i$ 
by $\varphi_j$ for $i < j \leq n$ and absorbing the quotient into the 
coefficient $q_j$ of $\varphi_j$, we may assume that $\deg_{t_j} q_i < d_j$ 
for all $j > i$.  It now follows easily that for all $1 \leq m \leq n$, 
$\sum_{i=1}^n q_i \varphi_i$ is either $0$ or has $t_i$-degree at least 
$d_i$ for some $1 \leq i \leq m$.  Applying this with $m = n$ shows $f = 0$. \\
d) This follows immediately from part c).  
\end{proof}
\noindent
Let $R$ be a ring, and let $X_1,\ldots,X_n \subset R$ be finite and nonempty.  For $1 \leq i \leq n$ let 
$\varphi_i(t_i)$ be as in (\ref{DIAGONALPHIEQ}) and put $\Phi = \langle \varphi_1,\ldots,\varphi_n \rangle$.  Put $a_i = \# X_i$ and $X = \prod_{i=1}^n X_i$.  We say $f \in R[t]$ is \textbf{X-reduced} 
if it is $(a_1,\ldots,a_n)$-reduced, and we write $\mathcal{R}_X$ for $\mathcal{R}_d$.  We have $\dim \mathcal{R}_d = \prod_{i=1}^n a_i = \# X$.  The 
\textbf{X-reduced representative} of $f$ is the unique polynomial $r_X(f)$ 
such that $f-r_X(f) \in \langle \varphi_1,\ldots,\varphi_n \rangle$.  
\\ \\
Let $S$ be a subset of a ring $R$.  We say $S$ satisfies \textbf{Condition (F)} 
(resp. \textbf{Condition (D)} if for all $x,y \in S$, $x \neq y \implies 
x-y \in R^{\times}$ (resp. $x-y$ is a non-zerodivisor in $R$: if $(x-y)z = 0$ 
then $z = 0$).  Condition (F) implies Condition (D).  Observe that $R$ is a field 
iff every subset satisfies Condition (F), and $R$ is a domain iff every 
subset satisfies Condition (D).  If $X = \prod_{i=1}^n X_i \subset R^n$, we say $X$ satisfies Condition (F) (resp. Condition (D)) if every $X_i$ 
satisfies Condition (F) (resp. Condition (D)).  

\begin{thm}(\textbf{CATS Lemma} \cite{Chevalley35} \cite{Alon-Tarsi92}, \cite{Schauz08a})
\label{CATLEMMA}
\label{CATS}
Let $R$ be a ring.  For $1 \leq i \leq n$, let $X_i \subset R$ be nonempty and finite.  Put $X = \prod_{i=1}^n X_i$. \\
a) (Schauz) The following are equivalent: \\
(i) $X$ satisfies condition (D). \\
(ii) If $f \in \mathcal{R}_X$ and $f(x) = 0$ for all $x \in X$, then $f = 0$. \\
(iii) We have $\Phi = I(X)$.  \\
b) (Chevalley-Alon-Tarsi) The above conditions always hold when $R$ is a domain.
\end{thm}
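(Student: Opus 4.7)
The plan is to exploit the isomorphism $\mathcal{R}_X \cong R[t]/\Phi$ from Proposition \ref{CYLINDRICALREDUCTION}(c), together with the inclusion $\Phi \subseteq I(X)$ (which holds since each $\varphi_i$ vanishes on $X$ by construction). Under this isomorphism, the kernel of the composite $\mathcal{R}_X \hookrightarrow R[t] \twoheadrightarrow R[t]/I(X)$ is exactly $\{f \in \mathcal{R}_X \mid f|_X = 0\}$. Condition (ii) asserts that this kernel is zero, which, given that $\mathcal{R}_X \to R[t]/\Phi$ is already an isomorphism, amounts to injectivity of the quotient map $R[t]/\Phi \to R[t]/I(X)$, i.e.\ to $\Phi = I(X)$. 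So (ii) $\Longleftrightarrow$ (iii) is formal.

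For (i) $\Longrightarrow$ (ii) I would induct on $n$. In the base case $n=1$, let $f \in R[t_1]$ have $\deg f < a_1 = \#X_1$ and vanish on $X_1 = \{x_1, \ldots, x_{a_1}\}$. Since $t_1 - x_1$ is monic, Lemma \ref{POLDIV} gives $f = (t_1 - x_1)g + r$ with $r \in R$; evaluating at $x_1$ forces $r = 0$, so $f = (t_1 - x_1)g$. Evaluating at $x_2$ yields $(x_2 - x_1)g(x_2) = 0$, and Condition (D) on $X_1$ forces $g(x_2) = 0$. Iterating, $\varphi_1 \mid f$, but $\deg f < \deg \varphi_1$ forces $f = 0$. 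For the inductive step, expand $f = \sum_{j=0}^{a_n - 1} f_j(t_1, \ldots, t_{n-1})\, t_n^j$ with each $f_j$ reduced in the first $n-1$ variables; fixing $(y_1, \ldots, y_{n-1}) \in X_1 \times \cdots \times X_{n-1}$ and applying the base case to $f(y_1, \ldots, y_{n-1}, t_n) \in R[t_n]$ forces each $f_j(y_1, \ldots, y_{n-1}) = 0$, and induction on $n$ finishes.

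The remaining implication (ii) $\Longrightarrow$ (i) is the crux, since it requires an explicit construction. I would argue contrapositively: assuming some $X_i$ fails Condition (D), I exhibit a nonzero $X$-reduced polynomial vanishing on $X$. Pick $x \neq y$ in $X_i$ and $z \in R \setminus \{0\}$ with $(x-y)z = 0$, enumerate $X_i = \{x, y, x_3, \ldots, x_{a_i}\}$, and set
\[ f \;=\; z\,(t_i - x) \prod_{j=3}^{a_i}(t_i - x_j) \;\in\; R[t]. \]
As a polynomial in $t_i$ alone of degree $a_i - 1 < a_i$ (and degree $0$ in the other variables), $f$ lies in $\mathcal{R}_X$ and is nonzero, its leading $t_i$-coefficient being $z$. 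Evaluation at $t_i = x$ and at each $t_i = x_j$ ($j \geq 3$) visibly gives $0$; evaluation at $t_i = y$ gives $z(y-x)\prod_{j \geq 3}(y - x_j) = 0$ since $z(y-x) = 0$. Hence $f$ vanishes on all of $X$, contradicting (ii). Part b) is then immediate: in a domain any two distinct elements have nonzerodivisor difference, so (i) holds automatically and (ii), (iii) follow from what was already proved.
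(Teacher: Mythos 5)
Your proof is correct and follows essentially the same route as the paper: the same induction on $n$ for (i) $\Rightarrow$ (ii), the same explicit witness $z(t_i-x)\prod_j(t_i-x_j)$ for the converse, and the (ii) $\Leftrightarrow$ (iii) equivalence is the same appeal to Proposition \ref{CYLINDRICALREDUCTION}c), merely phrased via the quotient map $R[t]/\Phi \to R[t]/I(X)$ instead of via the reduced representative $r_X(f)$.
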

\begin{proof}
a) (i) $\implies$ (ii): By induction on $n$: suppose $n = 1$.  Write 
$X = \{x_1,\ldots,x_{a_1}\}$, and let $f \in R[t_1]$ have degree less than 
$a_1-1$ such that $f(x_i) = 0$ for all $1 \leq i \leq a_1$.  By Polynomial 
Division, we can write $f = (t_1-x_1)f_2$ for $f_2 \in R[t_1]$.  Since $x_2-x_1$ is not a zero-divisor, 
$f_2(x_2) = 0$, so $f_2(t_1) = (t_1-x_2)$.  Proceeding in this manner we eventually get $f(t_1) = (t_1-x_1) \cdots (t_1 - x_{a_1}) f_{a_1+1}(t_1)$, and comparing 
degrees shows $f = 0$.  Suppose $n \geq 2$ and that the result holds in $n-1$ variables.  Write
\[ f= \sum_{i=0}^{a_n-1} f_i(t_1,\ldots,t_{n-1}) t_n^i\]
with $f_i \in R[t_1,\ldots,t_{n-1}]$.  If $(x_1,\ldots,x_{n-1}) \in \prod_{i=1}^{n-1} X_i$, then $f(x_1,\ldots,x_{n-1},t_n) \in R[t_n]$ has degree less than $a_n$ and vanishes for all $a_n$ elements $x_n \in X_n$, so it is the zero polynomial: $f_i(x_1,\ldots,x_{n-1}) = 0$ for all $0 \leq i \leq a_n$.  By induction, each $f_i(t_1,\ldots,t_{n-1})$ is the zero polynomial and thus $f$ is the zero polynomial. \\
(ii) $\implies$ (iii):  Certainly $\Phi \subset I(X)$.  Let $f \in I(X)$.  
Since $f-r_X(f) \in \Phi \subset I(X)$, for all $x \in X$ we have 
$r_X(f) = f(x) = 0$.  Then (i) gives $r_X(f) = 0$ and thus $f \in \Phi$. \\
(iii) $\implies$ (i): We argue by contraposition: suppose $X$ does not 
satisfy Condition (D).  Then for some $1 \leq i \leq n$, we may write 
$X_i = \{x_1,x_2,\ldots,x_{a_i}\}$ such that there is $0 \neq z \in R$ 
with $(x_1-x_2)z = 0$.  Then $f = z(t_i-x_2)(t_i-x_3) \cdots (t_i-x_{a_i})$
is a nonzero element of $I(X) \cap \mathcal{R}_X$, hence 
$f \in I(X) \setminus \Phi$.  \\
b) If $R$ is a domain then Condition (D) holds for every $X$.  
\end{proof}

\noindent
Suppose $F$ is a domain and $f \in F[t]$ vanishes on $X$.  By Theorem \ref{CATLEMMA} $f \in \Phi$, and thus by Proposition \ref{CYLINDRICALREDUCTION} there are $q_1,\ldots,q_n \in F[t]$ with $\deg q_i \leq \deg f - a_i$ for 
all $i$ such that $f-\sum_{i=1}^n q_i \varphi_i = r_X(f) = 0$, so 
$f = \sum_{i=1}^n q_i \varphi_i$.  This proves Theorem \ref{CN}b).

\subsection{The Atomic Formula and the Coefficient Formula}


\begin{lemma}
Suppose Condition (F).  Let $x = (x_1,\ldots,x_n) \in X$, and put
\[\delta_{X,x} = \prod_{i=1}^n \prod_{y_i \in X_i \setminus \{x_i\}} 
\frac{t_i-y_i}{x_i-y_i} = \prod_{i=1}^n 
\frac{\varphi_i(t_i)}{(t_i-x_i)\varphi_i'(x_i)}
\in F[t]. \]
a) We have $\delta_{X,x}(x) = 1$.  \\
b) If $y \in X \setminus \{x\}$, then $\delta_{X,x}(y) = 0$.  \\
c) For all $1 \leq i \leq n$, $\deg_{t_i} \delta_{X,x} = a_i -1$.  In particular, $\delta_{X,x}$ is $X$-reduced. 
\end{lemma}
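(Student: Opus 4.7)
The plan is to observe that everything reduces to elementary evaluation once the two displayed forms of $\delta_{X,x}$ are reconciled, so I would spend most of the argument on that reconciliation and then dispatch (a), (b), (c) in a few lines each.

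First I would justify that $\delta_{X,x}$ is a well-defined element of $F[t]$: Condition (F) makes each difference $x_i - y_i$ with $y_i \in X_i \setminus \{x_i\}$ a unit of the ambient ring, so dividing by it is legitimate. Next I would check that the two expressions given for $\delta_{X,x}$ coincide. Since $\varphi_i(t_i) = \prod_{y_i \in X_i}(t_i - y_i)$, factoring out $t_i - x_i$ gives $\varphi_i(t_i)/(t_i-x_i) = \prod_{y_i \in X_i \setminus \{x_i\}}(t_i - y_i)$. For the denominators, the Leibniz rule yields
\[
\varphi_i'(t_i) = \sum_{z \in X_i} \prod_{y_i \in X_i \setminus \{z\}} (t_i - y_i),
\]
and evaluating at $t_i = x_i$ kills every summand except the $z = x_i$ term (each of the other summands contains the factor $x_i - x_i = 0$), giving $\varphi_i'(x_i) = \prod_{y_i \in X_i \setminus \{x_i\}}(x_i - y_i)$. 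Multiplying these two identities together over $i$ shows the two formulas agree.

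For part (a), I would substitute $t = x$ into the first expression: every factor $\frac{t_i - y_i}{x_i - y_i}$ becomes $\frac{x_i - y_i}{x_i - y_i} = 1$, so the product collapses to $1$. For part (b), pick any $y \in X$ with $y \neq x$ and choose an index $i$ with $y_i \neq x_i$; then in the $i$-th inner product the factor indexed by $y_i$ itself evaluates to $\frac{y_i - y_i}{x_i - y_i} = 0$, which annihilates the entire product.

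For part (c), the $i$-th inner factor $\prod_{y_i \in X_i \setminus \{x_i\}} \frac{t_i - y_i}{x_i - y_i}$ is a polynomial in $t_i$ alone, of degree exactly $a_i - 1 = \#X_i - 1$ (it equals $\varphi_i'(x_i)^{-1}$ times a monic polynomial of that degree). Different factors involve disjoint variables, so $\deg_{t_i} \delta_{X,x} = a_i - 1 < a_i$ for each $i$, and hence $\delta_{X,x} \in \mathcal{R}_X$. The only step needing genuine care is the Leibniz computation of $\varphi_i'(x_i)$; everything else is a one-line substitution.
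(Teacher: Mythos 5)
Your proof is correct and is exactly the elementary verification the paper intends: the paper's own ``proof'' is simply ``Left to the reader,'' and your reconciliation of the two displayed forms via the Leibniz rule, followed by direct substitution for (a) and (b) and the degree count for (c), is the standard argument. Nothing is missing; in particular you rightly note that Condition (F) makes each $x_i-y_i$ a unit, so the leading coefficient $\varphi_i'(x_i)^{-1}$ is a unit and the degree in part (c) is exactly $a_i-1$.
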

\begin{proof} Left to the reader.
\end{proof}

\begin{thm}(Atomic Formula) Suppose Condition (F).  For all $f \in R[t]$, we have 
\begin{equation}
\label{ATOMICDECOMPOSITION}
r_X(f) = \sum_{x \in X} f(x) \delta_{X,x}.
\end{equation}
\end{thm}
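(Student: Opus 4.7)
The plan is to show that both sides of (\ref{ATOMICDECOMPOSITION}) are $X$-reduced polynomials that agree as functions on $X$, and then to invoke the uniqueness built into the CATS Lemma and Proposition \ref{CYLINDRICALREDUCTION} to conclude they are equal as polynomials.

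First I would observe that the right-hand side $g := \sum_{x \in X} f(x) \delta_{X,x}$ is $X$-reduced: by part c) of the preceding lemma each $\delta_{X,x}$ lies in $\mathcal{R}_X$, and $\mathcal{R}_X$ is closed under $R$-linear combinations by Proposition \ref{CYLINDRICALREDUCTION}a). The polynomial $r_X(f)$ is $X$-reduced by its very definition. So both sides live in $\mathcal{R}_X$ and it suffices to prove $g = r_X(f)$ there.

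Next I would evaluate both sides at an arbitrary $y \in X$. Using parts a) and b) of the preceding lemma, $\delta_{X,x}(y)$ equals $1$ when $x = y$ and $0$ otherwise, so $g(y) = f(y)$. On the other hand $f - r_X(f) \in \Phi \subset I(X)$, so $r_X(f)(y) = f(y)$ as well. Hence $g - r_X(f) \in \mathcal{R}_X$ vanishes on all of $X$.

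Finally, since Condition (F) implies Condition (D), the CATS Lemma (Theorem \ref{CATLEMMA}a), the implication (i) $\Rightarrow$ (ii)) forces $g - r_X(f) = 0$, which is precisely (\ref{ATOMICDECOMPOSITION}). There is no real obstacle here: the only thing to be mildly careful about is not appealing to the evaluation map being injective on all of $R[t]/\Phi$ (which needs Condition (D)/(F) via the CATS Lemma), but rather on the $R$-submodule $\mathcal{R}_X$, which is exactly what clause (ii) of the CATS Lemma provides.
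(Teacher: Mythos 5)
Your proof is correct and is exactly the paper's argument: the paper's one-line proof is ``apply Theorem \ref{CATLEMMA}a) to $r_X(f) - \sum_{x \in X} f(x)\delta_{X,x}$,'' and you have simply spelled out the details (both sides lie in $\mathcal{R}_X$, they agree on $X$, Condition (F) implies Condition (D), so clause (ii) of the CATS Lemma forces the difference to vanish). Nothing to add.
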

\begin{proof} 
Apply Theorem \ref{CATLEMMA}a) to $r_X(f) - \sum_{x \in X} f(x) \delta_{X,x}$.
\end{proof}
\noindent
Let $d = (d_1,\ldots,d_n) \in \N^d$.  We say a polynomial $f \in F[t]$ is 
\textbf{d-topped} if for any $e = (e_1,\ldots,e_n)$ with $d_i \leq e_i$ 
for all $1 \leq i \leq n$ and $\sum_{i=1}^n d_i < \sum_{i=1}^n e_i$, 
then the coefficient of $t^e = t_1^{e_1} \cdots e_n^{e_n}$ in $f$ is $0$.  

\begin{remark}
\label{DEGREEDTOPPED}
If $\deg f \leq d_1 + \ldots + d_n$, then $f$ is $d$-topped.  
\end{remark}

\begin{lemma}
\label{TOPPEDLEMMA}
Let $d = (a_1-1,\ldots,a_n-1)$, and let $f \in F[t]$ be $d$-topped.  
Then the coefficient of $t^d = t_1^{a_1-1} \cdots t_n^{a_n-1}$ in $f$ is 
equal to the coefficient of $t^d$ in $r_X(f)$.   
\end{lemma}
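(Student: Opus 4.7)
The plan is to expand $f = \sum_e c_e t^e$ as an $F$-linear combination of monomials, apply $r_X$ term by term, and track which monomials $t^e$ can contribute to the $t^d$-coefficient of $r_X(t^e)$. The key observation is that the reduction process in each variable $t_i$ is governed by $\varphi_i(t_i)\in F[t_i]$, which involves no other variable; so division by $\varphi_j$ for $j\neq i$ does not alter the $t_i$-exponent of any term.

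First I would establish the following claim: for any multi-index $e=(e_1,\ldots,e_n)$, if $e_i\leq d_i$ (equivalently $e_i<a_i$) for some $i$, then every monomial appearing in $r_X(t^e)$ has $t_i$-degree exactly $e_i$. Indeed, to reduce $t^e=t_1^{e_1}\cdots t_n^{e_n}$ to $X$-reduced form one divides in $t_j$ for each $j$ with $e_j\geq a_j$; because $\varphi_j\in F[t_j]$, each such division leaves the $t_i$-factor untouched when $j\neq i$, and no division in $t_i$ is needed since $e_i<a_i$ already. Consequently, whenever some $e_i<d_i$, the $t^d$-coefficient of $r_X(t^e)$ is zero.

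I would then split the remaining indices. If $e=d$, then $t^d$ is already $X$-reduced, so $r_X(t^d)=t^d$ and $t^e$ contributes $c_d$ to $[r_X(f)]_{t^d}$. If $e\geq d$ componentwise but $e\neq d$, then $\sum_i e_i>\sum_i d_i$, and the $d$-topped hypothesis forces $c_e=0$, killing the contribution. Combining the cases,
\[
[r_X(f)]_{t^d}=\sum_e c_e [r_X(t^e)]_{t^d}=c_d=[f]_{t^d},
\]
which is the desired equality.

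The only real ``obstacle'' is the first bookkeeping claim, and even that is immediate from the single-variable nature of each $\varphi_j$; no estimate on $\deg f$ is needed, which is good because the $d$-topped hypothesis does not bound $\deg f$ and the degree bounds from Proposition~\ref{CYLINDRICALREDUCTION}(b) therefore are not directly useful here.
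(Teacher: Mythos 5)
Your proof is correct and rests on the same mechanism as the paper's: reduction modulo $\varphi_j$ acts only on the $t_j$-exponent, so the only monomials whose reduction can contribute to the $t^d$-slot are those with exponent vector $\geq d$ componentwise, and $d$-toppedness kills all of these except $t^d$ itself. The paper packages this as an invariant preserved by each ``elementary cylindrical reduction,'' whereas your linearity-plus-monomial bookkeeping is essentially the same argument (if anything slightly cleaner, since you never need to verify that the intermediate polynomials stay $d$-topped).
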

\begin{proof}
Write $\varphi_i(t_i)= t_i^{a_i} - \psi_i(t_i)$, $\deg(\psi_i) < a_i$.  An \textbf{elementary cylindrical reduction} of $f$ consists of identifying a monomial which is divisible by $t_i^{a_i}$ and replacing $t_i^{a_i}$ by $\psi_i(t_i)$.  
Elementary cylindrical reduction on a $d$-topped polynomial 
yields a $d$-topped polynomial with the same coefficient of $t^d$.  The reduced polynomial $r_X(f)$ is obtained from $f$ by finitely many elementary cylindrical reductions.
\end{proof}
\noindent
We deduce a \emph{proof} of the Coefficient Formula (Theorem \ref{COEFFTHM}): by Remark \ref{DEGREEDTOPPED}, $f$ is $d$-topped, so 
$c_d(f) = c_d(r_X(f))$.   Apply (\ref{ATOMICDECOMPOSITION}).  


\begin{remark}
The proof shows that Theorem \ref{COEFFTHM} holds with weaker hypotheses: \\
(i) ``$\deg f \leq a_1 + \ldots + a_n$'' can be weakened to ``$f$ is $(a_1,\ldots,a_n)$-topped''.  \\
(ii)  ``$F$ is a field'' can be weakened to ``$S$ satisfies Condition (F)''.  It can be further weakened to ``$S$ satisfies 
Condition (D)'' so long as we interpret (\ref{COEFF}) as taking place 
in the total fraction ring of $F$ (equivalently, if we clear denominators).  \\
The first strengthening appears in the work of Schauz and Las\'on and 
the second appears in the work of Schauz (see especially \cite[Thm. 2.9]{Schauz08a}).  
\end{remark}


\subsection{The Restricted Variable Chevalley-Warning Theorem}
\textbf{} \\ \\ \noindent
For a ring $R$ and $x = (x_1,\ldots,x_n) \in R^n$, we put $w(x) = \# \{1 \leq i \leq n \mid x_i \neq 0 \}$.

\begin{thm}(Restricted Variable Chevalley-Warning Theorem)  \label{RESVARCHEV}
Let $P_1,\ldots,P_r \in \F_q[t] = \F_q[t_1,\ldots,t_n]$ be polynomials of 
degrees $d_1,\ldots,d_r$.  For $1 \leq i \leq n$, let $\varnothing \neq X_i 
\subseteq \F_q$ be subsets, put $X = \prod_{i=1}^n X_i$ and also 
\[V_X = \{ x = (x_1,\ldots,x_n) \in X \mid 
P_1(x) = \ldots = P_r(x) = 0 \}. \]
Suppose that $(d_1+\ldots+d_r)(q-1) < \sum_{i=1}^n \left( \# X_i - 1 \right)$.  Then: \\
a) As elements of $\F_q$, we have
\begin{equation}
\label{RVCWEQ1}
 \sum_{x \in V_X} \frac{1}{\prod_{i=1}^n \varphi_i'(x_i)} = 0 
\end{equation}
and thus \cite{Schauz08a} \cite{Brink11} 
\begin{equation}
\label{RVCWEQ2}
 \# V_X \neq 1. 
\end{equation}
b) (Chevalley-Warning) If $\sum_{i=1}^r d_i < 
n$, then $p \mid \# \{ x \in \F_q^n \mid P_1(x) = \ldots = P_r(x) \}$.  \\
c) (Wilson) If $(d_1 + \ldots + d_r)(q-1) < n$, then 
\[ \# \{ x \in V_{ \{0,1\}^n} \mid  w(x) \equiv 0 \pmod{2}\} \equiv \# \{ x \in V_{\{0,1\}^n} \mid w(x) \equiv 1 \pmod{2}\} \pmod{p}. \]
d) If $(d_1+\ldots+d_r)(q-1) < (q-2)n$, then 
\[ \sum_{x \in \F_q^n \mid f_1(x) = \ldots = f_r(x) = 0} x_1 \cdots x_n = 0. \]
\end{thm}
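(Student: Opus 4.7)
The plan is to apply the Coefficient Formula (Theorem \ref{COEFFTHM}) to the ``indicator polynomial''
\[
f = \prod_{j=1}^{r}\bigl(1 - P_j^{q-1}\bigr) \in \F_q[t].
\]
By Fermat's little theorem in $\F_q$, $f(x)=1$ if $x \in V_X$ and $f(x)=0$ otherwise, for every $x \in X$. Its degree is bounded by $\deg f \leq (d_1+\ldots+d_r)(q-1)$. Set $a_i = \# X_i - 1$; the hypothesis says $\deg f < a_1+\ldots+a_n$, so every monomial in $f$ has total degree strictly less than $\sum_i a_i$, and in particular the coefficient $c_d$ of $t_1^{a_1}\cdots t_n^{a_n}$ vanishes. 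Theorem \ref{COEFFTHM} then gives
\[
0 = c_d = \sum_{x \in X} \frac{f(x)}{\prod_{i=1}^n \varphi_i'(x_i)} = \sum_{x \in V_X}\frac{1}{\prod_{i=1}^n \varphi_i'(x_i)},
\]
which is (\ref{RVCWEQ1}). For (\ref{RVCWEQ2}), observe that each $\varphi_i'(x_i) \neq 0$ since $\varphi_i$ has simple roots on $X_i$; thus if $V_X = \{x\}$ the sum would be a nonzero element of $\F_q$, contradicting (\ref{RVCWEQ1}).

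Parts (b), (c), (d) are then specializations where one computes $\prod_{i=1}^n \varphi_i'(x_i)$ explicitly and verifies that the hypothesis $(d_1+\ldots+d_r)(q-1) < \sum_i(\# X_i - 1)$ recovers the stated inequalities. For (b), take $X_i = \F_q$: then $\varphi_i(t_i) = t_i^q - t_i$ gives $\varphi_i'(x_i) = -1$ for every $x_i$, so $\prod \varphi_i'(x_i) = (-1)^n$ and (\ref{RVCWEQ1}) becomes $(-1)^n \# V_X = 0$ in $\F_q$, i.e.\ $p \mid \# V_X$. For (c), take $X_i = \{0,1\}$: then $\varphi_i(t_i) = t_i^2 - t_i$, $\varphi_i'(0) = -1$, $\varphi_i'(1)=1$, so $\prod \varphi_i'(x_i) = (-1)^{n-w(x)}$, and (\ref{RVCWEQ1}) rearranges to $\sum_{x \in V_X}(-1)^{w(x)} = 0$ in $\F_q$, which is precisely the claimed congruence. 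For (d), take $X_i = \F_q^*$: then $\varphi_i(t_i) = t_i^{q-1} - 1$, so $\varphi_i'(x_i) = (q-1)x_i^{q-2} = -x_i^{-1}$, hence $\prod \varphi_i'(x_i) = (-1)^n (x_1 \cdots x_n)^{-1}$ and (\ref{RVCWEQ1}) gives $\sum_{x \in V_X} x_1\cdots x_n = 0$. To convert this into a sum over all of $\F_q^n$, note that $x_1\cdots x_n = 0$ whenever some coordinate of $x$ vanishes, so extending from $V_{(\F_q^*)^n}$ to $\{x \in \F_q^n : P_1(x) = \ldots = P_r(x) = 0\}$ adds only zero terms.

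There is no real obstacle: the only substantive step is the choice of $f$, and everything else is bookkeeping of $\varphi_i'$ on the three standard sets $\F_q$, $\{0,1\}$, and $\F_q^*$. The slight care needed is in distinguishing the hypothesis $\deg f < \sum_i a_i$ (which forces $c_d = 0$) from the borderline case $\deg f = \sum_i a_i$ (which is where Theorem \ref{PM} lives); the strict inequality in the hypothesis of Theorem \ref{RESVARCHEV} is exactly what makes the Coefficient Formula yield vanishing rather than non-vanishing.
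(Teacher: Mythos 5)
Your proposal is correct and follows essentially the same route as the paper: the paper also applies the Coefficient Formula to $P(t)=\prod_{j=1}^r\bigl(1-P_j(t)^{q-1}\bigr)$, notes that the degree hypothesis kills the coefficient of $t_1^{\#X_1-1}\cdots t_n^{\#X_n-1}$, and deduces (\ref{RVCWEQ1}). The paper explicitly leaves the specializations in parts b)--d) to the reader, and your computations of $\varphi_i'$ on $\F_q$, $\{0,1\}$, and $\F_q^{\times}$ are exactly the intended details.
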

\begin{proof}
a) We define
\[
P(t) = \chi_{P_1,\ldots,P_r}(t) =  \prod_{i=1}^r \left(1-P_i(t)^{q-1}\right),
\]
so $\deg P = (q-1)(d_1+\ldots+d_r) < \sum_{i=1}^n \left( \# X_i - 1 \right)$, and thus the coefficient of $t_1^{\# X_1-1} \cdots 
t_n^{\# X_n-1}$ in $P$ is $0$.  Applying the Coefficient Formula, we get
\[ 0 = \sum_{x \in X}  \frac{P(x)}{\prod_{i=1}^n \varphi_i'(x_i)} = 
 \sum_{x \in V_X} \frac{1}{\prod_{i=1}^n \varphi_i'(x_i)} \in \F_q. \]
Parts b) through d) follow from part a) by taking $X$ to be, respectively, 
$\F_q^n$, $\{0,1\}^n$ and $(\F_q^{\times})^n$, and computing the $\varphi_i'(t_i)'s$.  The details are left to the reader.
\end{proof}

\section{Further Analysis of the Evaluation Map}

\subsection{The Finitesatz holds only over a field}
\textbf{} \\ \\ \noindent
If $R$ is a ring which is not a field and $X \neq \varnothing$, then the assertion of Theorem \ref{FRN}a) remains meaningful with $R$ in place of $F$, but it is false.  
Let $x \in X$. Since $F[t]/\mm_x \cong F$, $\mm_x$ 
is \emph{not} maximal, so let $J$ be an ideal with $\mm_x \subsetneq J \subsetneq F[t]$, and let $f \in J \setminus \mm_x$.  Then $V_X(J) \subset V_X(\mm_x) = \{x\}$, 
and since $f \notin \mm_x$, $f(x) \neq 0$.  So
\[I(V_X(J)) = I(\varnothing) = F[t] \supsetneq J = J + I(X). \]

\subsection{Towards an Infinitesatz}
\textbf{} \\ \\ \noindent
We revisit the formalism of $\S$ 2.2: let $R$ be a ring any $X \subset R^n$.
\\ \\
For a subset $A \subset R^n$ we define the \textbf{Zariski closure} $\overline{A} = V(I(A))$.  Thus $\overline{A}$ is the set of points 
at which any polynomial which vanishes at every point of $A$ must also vanish.  A subset $A$ is \textbf{algebraic} if $A = \overline{A}$ and \textbf{Zariski-dense} 
if $\overline{A} = R^n$.  When $R$ is a domain 
the algebraic subsets are the closed sets of a topology, the \textbf{Zariski topology}.
this need not hold and some strange things can happen: for instance if $R = \Z/6\Z$ and $n = 1$ then $\overline{ \{2,3\} } 
= \{0,2,3,5\}$.  
\\ \\
If $F$ is an algebraically closed field and $X \subset F^n$ is algebraic, then using Hilbert's Nullstellensatz, for all ideals $J$ of $F[t]$, 
\[ I(V_X(J)) = I(V(J) \cap X) = I(V(J) \cap V(I(X))) \] \[ =I(V(J \cup I(X))) = I(V(J+I(X))) = \rad (J + I(X)). \]
When $X$ is infinite, we {\sc claim} the ``$\rad$'' cannot be removed.  
\\
\noindent
{\sc proof of claim}: Suppose $\rad (J + I(X)) = J + I(X)$ for all $J$.  Equivalently, every ideal 
$J \supset I(X)$ is a radical ideal.  Then for any element $x$ the quotient ring $F[t]/I(X)$, since $(x^2)$ is radical we must have $(x) = (x^2) = (x)^2$.  It follows that $F[t]/I(X)$ is absolutely flat, hence has Krull dimension zero, 
hence Artinian, hence has finitely many maximal ideals.  Since $x \mapsto \mm_x$ is an injection from $X$ to the set of maximal ideals of $F[t]/I(X)$, $X$ is finite.  
\\ \\
The case of an arbitrary subset over an arbitrary ring $R$ is much more challenging.  In fact, even determining whether the evaluation map $E_X: R[t] \ra R^X$ is surjective -- \textbf{existence of interpolation polynomials} -- or injective -- \textbf{uniqueness of interpolation polynomials} --
becomes nontrivial.  In the next section we address these questions, but we are not able to resolve them completely.  

\subsection{Injectivity and Surjectivity of the Evaluation Map}

\begin{lemma}
\label{LAMLEMMA}
Let $R$ be a ring.  Let $M_1$ and $M_2$ be free $R$-modules, with bases 
$\mathcal{B}_1$ and $\mathcal{B}_2$.  If $\iota: M_1 \ra M_2$ is an injective $R$-module map, then $\# \mathcal{B}_1 \leq \# \mathcal{B}_2$.
\end{lemma}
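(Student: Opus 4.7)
The plan is to handle the finite case first (the Strong Rank Condition for commutative rings) and then reduce the general statement to it. The finite case rests on a classical observation of McCoy about determinants over commutative rings.

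First I would treat the finite case: assume $\# \mathcal{B}_1 = m$ and $\# \mathcal{B}_2 = n$ are finite with $m > n$, aiming for contradiction. Fixing $R$-module isomorphisms $M_1 \cong R^m$ and $M_2 \cong R^n$ via the bases, the injection $\iota$ becomes an $n \times m$ matrix $A$. I would enlarge $A$ by appending $m - n$ zero rows to an $m \times m$ matrix $\tilde{A}$; since the new coordinates are identically zero, the map $\tilde{A}: R^m \to R^m$ has the same kernel as $A$ and so remains injective, yet $\det \tilde{A} = 0$. The crux is then: over a nonzero commutative ring, a square matrix with vanishing determinant cannot act injectively on the corresponding free module. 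To see this, let $k < m$ be maximal with some $k \times k$ minor of $\tilde{A}$ nonzero (if no such $k$ exists, then $\tilde{A} = 0$ and $R \neq 0$ gives an immediate kernel), and after reordering rows and columns assume the top-left $k \times k$ block has nonzero determinant $c$. I would define $v \in R^m$ whose $j$-th coordinate for $1 \leq j \leq k+1$ is $(-1)^{k+1+j}$ times the $k \times k$ minor of rows $1,\ldots,k$ with column $j$ deleted, and zero otherwise. Then $v_{k+1} = c \neq 0$, and each $(\tilde{A}v)_i$ is, up to sign, the cofactor expansion along the last row of the $(k+1) \times (k+1)$ submatrix of $\tilde{A}$ with rows $1,\ldots,k,i$ and columns $1,\ldots,k+1$: for $i \leq k$ this has a repeated row, while for $i > k$ it is a genuine $(k+1) \times (k+1)$ minor of $\tilde{A}$ and vanishes by maximality of $k$. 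Thus $\tilde{A}v = 0$, contradicting injectivity.

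Second, I would reduce the general case to the finite case. If $\mathcal{B}_2$ is finite of size $n$, then any $n+1$ distinct elements of $\mathcal{B}_1$ span a free submodule of rank $n+1$ embedding into $R^n$, contradicting the finite case. If $\mathcal{B}_2$ is infinite and $\# \mathcal{B}_1 > \# \mathcal{B}_2$, I would assign to each $b \in \mathcal{B}_1$ the finite support $F_b \subset \mathcal{B}_2$ of $\iota(b)$. If every fiber $\{b \in \mathcal{B}_1 : F_b = F\}$ had cardinality at most $\# F$, then
\[
 \# \mathcal{B}_1 \;=\; \sum_{F \subset \mathcal{B}_2,\ F \text{ finite}} \#\{b : F_b = F\} \;\leq\; \sum_{F} \# F \;\leq\; \aleph_0 \cdot \# \mathcal{B}_2 \;=\; \# \mathcal{B}_2,
\]
a contradiction. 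So some finite $F$ receives at least $\# F + 1$ preimages $b_1,\ldots,b_{\# F + 1}$; these span a free submodule of rank $\# F + 1$ in $M_1$ whose image under $\iota$ lies in $R^F \cong R^{\# F}$, contradicting the finite case.

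The main obstacle is the clean execution of the finite case: correctly defining the kernel vector $v$ and verifying the cofactor identity $\tilde{A}v = 0$ requires careful row-by-row accounting, and the fact that $v \neq 0$ depends crucially on the maximality of $k$ (which makes $v_{k+1} = c$ nonzero). Once the finite case is secured, the reduction step is a straightforward counting argument.
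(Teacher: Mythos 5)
Your proof is correct. The paper does not actually argue the lemma: it simply cites two results of Lam (the strong rank condition for commutative rings, \cite[Cor.~1.38]{LMR}, and an exercise from \cite{EMR} handling the comparison of infinite bases), so what you have written is a self-contained expansion of precisely the content being outsourced. Your finite case is the classical McCoy argument --- padding the $n\times m$ matrix to a singular square matrix and exhibiting an explicit kernel vector built from the cofactors of a maximal nonvanishing minor --- and the verification that $\tilde{A}v=0$ via Laplace expansion with a repeated row (for $i\le k$) or a vanishing $(k+1)\times(k+1)$ minor (for $i>k$) is exactly right; Lam's own route to Cor.~1.38 reduces instead to the field case via a maximal ideal, so your determinantal argument is a legitimate, equally standard alternative that stays entirely elementary. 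Your reduction of the infinite case to the finite one via the finite supports $F_b$ and the cardinality count $\sum_F \# F \le \aleph_0\cdot\#\mathcal{B}_2 = \#\mathcal{B}_2$ is also the standard content of the cited exercise. The only caveat, which you handle implicitly by saying ``nonzero commutative ring'': the statement is false for the zero ring, an exclusion the paper also leaves tacit, and your argument correctly locates where $R\ne 0$ is used (a basis vector, and the cofactor $v_{k+1}=c$, must be nonzero).
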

\begin{proof}
Combine \cite[Cor. 1.38]{LMR} and \cite[Ex. 1.24]{EMR}.
\end{proof}

\begin{lemma}
\label{LINEARALGEBRAFACT}
Let $R$ be a ring, and let $X$ be an infinite set.  Then $R^X$ is \emph{not} a countably generated $R$-module.
\end{lemma}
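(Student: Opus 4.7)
The plan is to reduce to the assertion that for any field $k$, the vector space $k^\N$ is not countably generated, and then to verify the latter by a case split on the cardinality of $k$.

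First, we may assume $R \neq 0$, for otherwise $R^X = 0$ is generated by the empty set. Pick a maximal ideal $\mm$ of $R$ and set $k = R/\mm$. Coordinatewise reduction gives a surjective $R$-module homomorphism $R^X \twoheadrightarrow k^X$ whose target is annihilated by $\mm$, so the $R$-module structure on $k^X$ coincides with its natural $k$-vector-space structure. Hence, were $R^X$ countably generated as an $R$-module, $k^X$ would be countably generated as a $k$-vector space. Since $X$ is infinite, fix an injection $\N \hookrightarrow X$; the resulting restriction $k^X \twoheadrightarrow k^\N$ is surjective, so it suffices to prove that $k^\N$ is not countably generated as a $k$-vector space.

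Suppose, for contradiction, that $\{g_n\}_{n \in \N}$ generates $k^\N$, so every element of $k^\N$ is a finite $k$-linear combination of the $g_n$. If $|k| \leq \aleph_0$, then the set of such finite combinations has cardinality at most $\aleph_0$, whereas $|k^\N| = |k|^{\aleph_0} = 2^{\aleph_0}$ since $|k| \geq 2$; this is a contradiction. If $|k| > \aleph_0$, then for each $s \in k$ form the geometric vector $v_s = (1, s, s^2, s^3, \ldots) \in k^\N$, and I claim the family $\{v_s\}_{s \in k}$ is $k$-linearly independent. Indeed, any $k$-linear relation $\sum_{i=1}^n c_i v_{s_i} = 0$ among distinct $s_i$ projects on the first $n$ coordinates to the homogeneous system whose coefficient matrix is the Vandermonde $(s_i^{\,j})_{1 \leq i \leq n,\, 0 \leq j \leq n-1}$; since the $s_i$ are distinct, this matrix is nonsingular and forces all $c_i = 0$. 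Thus $\dim_k k^\N \geq |k| > \aleph_0$, again contradicting countable generation.

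No step is truly difficult; the only mild subtlety is that the Vandermonde trick alone yields only $|k|$ linearly independent vectors, which fails to beat $\aleph_0$ when $k$ is countable, forcing a separate cardinality argument for small fields. A uniform alternative would be to topologise $k^\N$ as a product of discrete copies of $k$, note that it is a Baire space (compact Hausdorff if $k$ is finite, Polish otherwise) and that each finite-dimensional subspace is closed and nowhere dense, and then invoke Baire category; but the case split above is elementary and avoids topology.
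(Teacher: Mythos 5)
Your proof is correct, and it takes a genuinely different route from the paper's. You reduce modulo a maximal ideal $\mm$ of $R$ to the field $k = R/\mm$, restrict along $\N \hookrightarrow X$, and then show $k^\N$ has uncountable $k$-dimension by a case split: a counting argument ($\#k^\N = 2^{\aleph_0}$ versus the countably many finite combinations) when $k$ is countable, and the linearly independent Vandermonde family $\{(1,s,s^2,\ldots)\}_{s \in k}$ when $k$ is uncountable. The paper instead stays over $R$ itself: it transports the Dedekind cuts $\{y \in \Q \mid y < x\}$, $x \in \R$, into $X$ to get an uncountable chain of subsets, checks that the corresponding characteristic functions are $R$-linearly independent (evaluate at a point of the largest set not in the others), lifts them along a hypothetical surjection $\bigoplus_{i=1}^{\infty} R \twoheadrightarrow R^X$, and contradicts Lemma \ref{LAMLEMMA}, the rank-comparison fact that a free module with uncountable basis cannot embed in one with countable basis over a commutative ring. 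Your reduction buys entirely elementary linear algebra over a field, avoiding any appeal to rank comparison for free modules over general commutative rings, at the cost of invoking a maximal ideal and a two-case analysis; the paper's chain-of-characteristic-functions argument is uniform and works directly with arbitrary $X$ and arbitrary $R$, but leans on the cited module-theoretic facts from Lam. Note that both arguments (and indeed the lemma as stated) tacitly require $R \neq 0$, a degenerate point you flag explicitly.
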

\begin{proof}
Step 1: For $x \in \R$, let $A_x = \{ y \in \Q \mid y < x\}$, and let $\mathcal{C}_{\Q} = \{ A_x\}_{x \in \R}$.  Then $\mathcal{C}_{\Q} \subset 2^{\Q}$ is an uncountable linearly ordered family of nonempty subsets of $\Q$.  Since $X$ is infinite, 
there is an injection $\iota: \Q \hookrightarrow X$; then $\mathcal{C} = \{ \iota(A_x)\}_{x \in \R}$ is an uncountable linearly ordered family of nonempty subsets of $X$.  
\\ 
Step 2: For each $A \in \mathcal{C}$, let $1_A$ be the characteristic function of $A$.  Then $\{1_A\}_{A \in \mathcal{C}}$ is an $R$-linearly independent set: let $A_1,\ldots,A_n \in \mathcal{C}$ and $\alpha_1,\ldots,\alpha_n \in R$ be such that $\alpha_1 1_{A_1} + \ldots + \alpha_n 1_{A_n} \equiv 0$.  We may order the $A_i$'s such that $A_1 \subset \ldots \subset A_n$ and thus there is $x \in A_n \setminus \bigcup_{i=1}^{n-1} A_{i}$.  Evaluating at $x$ gives $\alpha_n = 0$.  In a similar manner we find that $\alpha_{n-1} = \ldots = \alpha_1 = 0$.
\\ 
Step 3: Suppose $R^X$ is countably generated: thus there is a surjective $R$-module map $\Phi: \bigoplus_{i=1}^{\infty} R \rightarrow R^X$.  For each $A \in \mathcal{C}$, choose $e_A \in \Phi^{-1}(1_A)$ and put $\mathcal{S} = \{ e_A \mid A \in \mathcal{C} \}$.  By Step 2, $\mathcal{S}$ is uncountable and $R$-linearly independent, so it spans a free $R$-module with an uncountable basis which is 
an $R$-submodule of $\bigoplus_{i=1}^{\infty} R$, contradicting Lemma \ref{LAMLEMMA}.
\end{proof}

\begin{thm}
\label{EVALSURJ}
If $X \subset R^n$ is infinite, then $E_X: R[t] \ra R^X$ is \emph{not} surjective.
\end{thm}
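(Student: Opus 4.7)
The plan is to exploit a straightforward cardinality/generation mismatch using Lemma \ref{LINEARALGEBRAFACT}, which has already done the essential work. The key observation is that $R[t] = R[t_1,\ldots,t_n]$ is a \emph{countably generated} $R$-module: indeed, it is free on the countable basis of monomials $\{t_1^{a_1}\cdots t_n^{a_n} : (a_1,\ldots,a_n) \in \N^n\}$. Any homomorphic image of a countably generated module is countably generated.

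So the proof goes as follows. Suppose for contradiction that $E_X : R[t] \to R^X$ is surjective. Since $E_X$ is an $R$-module homomorphism and $R[t]$ is countably generated as an $R$-module, $R^X = \im E_X$ would be a countably generated $R$-module. But Lemma \ref{LINEARALGEBRAFACT} asserts that $R^X$ is \emph{not} countably generated whenever $X$ is infinite (note that an infinite $X \subset R^n$ is in particular an infinite set, so the hypothesis applies). This is the desired contradiction.

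There is really no main obstacle here, since the combinatorial/linear-algebraic content was packaged into Lemma \ref{LINEARALGEBRAFACT}; the only thing to verify is the trivial fact that $R[t]$ is countably generated over $R$, and that surjective images of countably generated modules are countably generated. Thus the proof will be only a few lines.
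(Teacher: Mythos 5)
Your proposal is correct and is exactly the argument the paper gives: surjectivity of $E_X$ would make $R^X$ a quotient of the countably generated $R$-module $R[t]$, contradicting Lemma \ref{LINEARALGEBRAFACT}. You have merely spelled out the (trivial) verification that $R[t]$ is countably generated, which the paper leaves implicit.
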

\begin{proof}
If $E_X: R[t] \ra R^X$ were surjective, then $R^X$ would be a countably generated $R$-module, contradicting Lemma \ref{LINEARALGEBRAFACT}.
\end{proof}
\noindent
If $Y \subset X \subset R^n$, restricting functions from $X$ 
to $Y$ gives a surjective $R$-algebra map $\mathfrak{r}_Y: R^X \ra R^Y$.  
Moreover, $E_Y = \mathfrak{r}_Y \circ E_X$.  Thus if $E_X$ is surjective, so is 
$E_Y$.
\\ \\
Let $\pi_i: R^n \ra R$ be the $i$th projection map: $\pi_i: (x_1,\ldots,x_n) \mapsto x_i$.  For a subset $X \subset R^n$, we define the \textbf{cylindrical hull} $\mathcal{C}(X)$ as $\prod_{i=1}^n \pi_i(X)$: it is the unique minimal cylindrical subset containing $X$, and it is finite iff $X$ is.  

\begin{prop}
\label{PARTIALSURJ}
Let $X \subset R^n$ be finite.  \\
a) If $\mathcal{C}(X)$ satisfies Condition (F), then $E_X$ is surjective.  \\
b) If there is a nonempty cylindrical subset $Y = \prod_{i=1}^n Y_i \subset X$ which does not satisfy Condition (F), then $E_X$ is not surjective.
\end{prop}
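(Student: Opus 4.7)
The plan is to handle the two parts separately: for (a), construct explicit interpolating polynomials using the atomic polynomials already developed for a cylindrical set; for (b), use the surjectivity of the restriction map $\mathfrak{r}_Y$ to reduce to showing $E_Y$ itself is not surjective, then obstruct interpolation on $Y$ by the divisibility relation $a-b \mid P(a)-P(b)$ that every polynomial must satisfy.

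For part (a), I would apply the atomic polynomial construction (from the unnumbered lemma preceding the Atomic Formula) to the cylindrical set $\mathcal{C}(X) = \prod_{i=1}^n \pi_i(X)$. Since $\mathcal{C}(X)$ satisfies Condition (F), every difference $x_i - y_i$ in the denominator of
\[\delta_{\mathcal{C}(X),x} = \prod_{i=1}^n \prod_{y_i \in \pi_i(X) \setminus \{x_i\}} \frac{t_i - y_i}{x_i - y_i}\]
is a unit, so $\delta_{\mathcal{C}(X),x} \in R[t]$ is defined. By the cited lemma, $\delta_{\mathcal{C}(X),x}$ takes value $1$ at $x$ and $0$ at every other point of $\mathcal{C}(X)$, and in particular at every other point of $X \subset \mathcal{C}(X)$. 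Then for any $f \colon X \to R$ the polynomial $P = \sum_{x \in X} f(x)\,\delta_{\mathcal{C}(X),x}$ satisfies $E_X(P) = f$.

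For part (b), the paragraph just before the proposition shows that if $E_X$ is surjective then so is $E_Y$, so it suffices to exhibit a function $Y \to R$ not in the image of $E_Y$. Say $Y_i$ fails Condition (F): pick $a,b \in Y_i$ with $a \neq b$ and $a-b \notin R^\times$. Define $f \colon Y \to R$ by $f(z) = 1$ if $z_i = a$ and $f(z) = 0$ otherwise, and suppose $P \in R[t]$ realized $f$. Choose any $(y_j)_{j \neq i} \in \prod_{j \neq i} Y_j$ (nonempty since $Y \neq \varnothing$) and specialize all other variables to obtain $Q(t_i) = P(y_1,\ldots,y_{i-1},t_i,y_{i+1},\ldots,y_n) \in R[t_i]$, which satisfies $Q(a) = 1$ and $Q(b) = 0$. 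Polynomial division by the monic polynomial $t_i - b$ (Lemma \ref{POLDIV}) gives $Q(t_i) - Q(b) = (t_i - b)\tilde{Q}(t_i)$, so $Q(a) - Q(b) = (a-b)\tilde{Q}(a)$, forcing $a - b \mid 1$, a contradiction.

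The main conceptual point, and the only mildly subtle step, is the reduction in part (b) to a one-variable divisibility obstruction; once one thinks to fix all but one coordinate and compare values at $a$ and $b$, the rest is immediate. Part (a) is essentially a dressed-up Lagrange interpolation, where the sole input is that Condition (F) on $\mathcal{C}(X)$, rather than on $X$ itself, is enough to make the atomic polynomials well-defined.
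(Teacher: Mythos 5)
Your proof is correct and follows essentially the same route as the paper: part (a) via the atomic polynomials $\delta_{\mathcal{C}(X),x}$ of the cylindrical hull, and part (b) by reducing to $E_Y$ and obstructing the delta function at a point where two coordinates differ by a non-unit. The only (cosmetic) difference is in part (b), where the paper passes to a maximal ideal containing $a-b$ and argues $f(y)\equiv f(y')$ modulo that ideal, whereas you use the underlying divisibility $a-b \mid Q(a)-Q(b)$ directly, which is marginally more elementary.
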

\begin{proof}
a) Since $X \subset \mathcal{C}(X)$, it suffices to show that $E_{\mathcal{C}(X)}$ 
is surjective, and we have essentially already done this: under Condition (F) we may \emph{define} $r_X(f) = \sum_{x \in X} f(x) \delta_{X,x}(t)$, and as in $\S$ 3.3 we see that $E(r_X(f)) = f$.  \\
b) There is $1 \leq i \leq n$ and $y_i \neq y_i' \in Y_i$ such that 
$y_1-y_2 \notin R^{\times}$, hence a maximal ideal $\mm$ of 
$R$ with $y_1-y_2 \in \mm$.  For all $j \neq i$, choose $y_j \in Y_j$; let 
$y = (y_1,\ldots,y_n)$; and let $y'$ be obtained from $y$ by changing the 
$i$th coordinate to $y_i'$.  For any $f \in F[t]$, $f(y) \equiv f(y') \pmod{m}$, 
so $f(y) - f(y') \in \mm$.  Hence the function $\delta_{Y,y}: Y \ra R$ which maps $y$ to $1$ and every other element of $Y$ to $0$ does not lie in the image of the evaluation map.  Thus $E_Y$ is not surjective, so $E_X$ cannot be surjective. 
\end{proof}
\noindent
Thus if $X$ is itself cylindrical, the evaluation map is surjective iff $X$ satisfies Condition (F): this result is due to Schauz.  Proposition \ref{PARTIALSURJ} is the mileage one gets from this in the general case.  When every cylindrical subset of $X$ 
satisfies condition (F) but $\mathcal{C}(X)$ does not, the question of the existence of interpolation polynomials is left open, to the best of my knowledge even e.g. over $\Z$.    
\\ \\
We say that a ring $R$ is \textbf{(F)-rich} (resp. \textbf{(D)-rich}) if for all $d \in \Z^+$ there is a $d$-element subset of $R$ satisfying Condition (F) (resp. Condition (D)).  If $\iota: R \hookrightarrow S$ is a ring embedding and $R$ is 
(F)-rich, then $S$ is (F)-rich, hence also (D)-rich.  

\begin{prop}
\label{PROP14}
Let $R$ be a ring and $X \subset R^n$.  Consider the following assertions: \\
(i) $E_X$ is injective.  \\
(ii) $X$ is infinite and Zariski-dense.  \\
a) We always have (i) $\implies$ (ii).  \\
b) If $R$ is (D)-rich -- e.g. if it contains an (F)-rich subring -- then (ii) $\implies$ (i). \\
c) If $R$ is finite, a domain, or an algebra over an infinite field, then (ii) $\implies$ (i). \\
d) If $R$ is an infinite Boolean ring -- e.g. $R = \prod_{i=1}^{\infty} \Z/2\Z$ -- and $X = R^n$, then (ii) holds and (i) does not.
\end{prop}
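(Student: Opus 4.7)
Proof proposal for Proposition \ref{PROP14}.

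The key preliminary observation is that $I(X) = I(\overline{X})$ for any subset $X$: one inclusion is obvious, and for the other, $f \in I(X)$ gives $V(f) \supset X$, whence the algebraic set $V(f)$ contains $\overline{X} = V(I(X))$. Since $\ker E_X = I(X)$, this means $E_X$ is injective iff $I(X) = 0$, and when $X$ is Zariski-dense we have $I(X) = I(R^n)$. So the whole proposition reduces to controlling when $I(R^n) = 0$. I would open the proof with this reduction.

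For (a), injectivity of $E_X$ gives $I(X) = 0$, so $\overline{X} = V(0) = R^n$ and $X$ is Zariski-dense. If $X$ were finite then $R^X$ would be free of finite rank, while $R[t]$ is free of countably infinite rank; the injection $E_X$ then contradicts Lemma \ref{LAMLEMMA}. For (b), assume (ii) and let $0 \neq f \in R[t]$; by the reduction above it suffices to exhibit $x \in R^n$ with $f(x) \neq 0$. Set $d_i = \deg_{t_i} f$ and, using (D)-richness, pick $X_i \subset R$ of cardinality $d_i+1$ satisfying Condition (D). Then $X = \prod_i X_i$ satisfies Condition (D) and $f$ is $X$-reduced, so Theorem \ref{CATLEMMA}(a) gives some $x \in X \subset R^n$ with $f(x) \neq 0$. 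Hence $I(R^n) = 0$.

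For (c), if $R$ is finite then $R^n$ is finite and no infinite $X$ exists, so the implication holds vacuously. If $R$ is a (necessarily infinite, else vacuous) domain, then every finite subset satisfies Condition (D) trivially, so $R$ is (D)-rich and (b) applies. If $R$ is an algebra over an infinite field $k$, then $k \hookrightarrow R$ and $k$ is (F)-rich (any $d$ distinct field elements pairwise differ by units), hence (D)-rich, and the remark preceding the proposition transfers (D)-richness to $R$; again (b) applies. For (d), in an infinite Boolean ring $R$ the set $X = R^n$ is certainly infinite and Zariski-dense (it equals $R^n$), while the nonzero polynomial $t_1^2 - t_1 \in R[t]$ vanishes identically on $R^n$ since $x^2 = x$ for every $x \in R$; thus $E_X$ is not injective.

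The only step with any genuine content is (b), and the main thing to get right is the bookkeeping with per-variable degrees so that the chosen cylindrical $X$ makes $f$ automatically $X$-reduced; once that is arranged, the CATS Lemma does all the work. Everything else is either a direct application of (b) or, in (a) and (d), an immediate appeal to Lemma \ref{LAMLEMMA} and the Boolean identity respectively.
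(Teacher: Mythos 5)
Your proposal is correct and follows essentially the same route as the paper: part a) by the rank comparison via Lemma \ref{LAMLEMMA} plus the Zariski-density observation, part b) by choosing a cylindrical Condition (D) set large enough that $f$ is $X$-reduced and invoking Theorem \ref{CATS}, and parts c), d) as direct consequences. The only cosmetic differences are that you make the reduction $I(X)=I(\overline{X})=I(R^n)$ explicit up front and use per-variable degrees $d_i=\deg_{t_i}f$ where the paper takes a single set of size $\deg f + 1$; both choices make $f$ $X$-reduced, so the arguments coincide.
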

\begin{proof}
a) By contraposition: suppose first that $X$ is finite.  Then 
$F^X$ is a free $F$-module of finite rank $\# X$ and $F[t]$ is a free $F$-module of infinite rank, so $E$ cannot be injective.  Now suppose $X$ is not 
Zariski dense: then there is 
$y \in F^n \setminus X$ and $f \in F[t]$ such that $E(f)|_X \equiv 0$ 
and $E(f)(y) \neq 0$, hence $0 \neq f \in \Ker E$.  \\
b) Let $f \in \Ker E_X = I(X)$, and let $d = \deg f$.  Since $X$ is Zariski-dense in $F^n$, $f(x) = 0$ for all $x \in F^n$.  Since $R$ is (D)-rich, there is a $S \subset R$ of cardinality $d+1$ satisfying Condition (D).  Put $X = \prod_{i=1}^n S$.  
Then $f \in \mathcal{R}_X$ and $f(x) = 0$ for all $x \in X$, so $f = 0$ by Theorem \ref{CATS}. \\
c) This is immediate from part b). \\
d) Since $R$ is infinite, $R^n$ is infinite and Zariski-dense.  Since $R$ is Boolean, the polynomial $t_1^2-t_1$ evaluates to zero on every $x \in R^n$.  
\end{proof}
\noindent
\textbf{Acknowledgments}: My interest in Combinatorial Nullstellens\"atze and connections to Chevalley's Theorem was kindled by correspondence with John R. Schmitt.  The main idea for the proof of Lemma \ref{LINEARALGEBRAFACT} is due to Carlo Pagano.  I thank Emil Je\v r\'abek for introducing me to the Finite 
Field Nullstellensatz.

\end{document}